\documentclass[12pt]{article}

\usepackage[T1]{fontenc}

\usepackage{tipa}

\usepackage{tikz}
\usepackage{bbm}
\usepackage{booktabs}
\usetikzlibrary{automata}
\usepackage{latexsym}

\usepackage{amsmath}
\usepackage{amsthm}
\usepackage{amssymb}
\usepackage{mathtools}

\makeatletter
\usepackage{setspace}
	\newcommand{\MSonehalfspacing}{
  		\setstretch{1.44}
  			\ifcase \@ptsize \relax
   		 \setstretch {1.448}
  			\or
   		 \setstretch {1.399}
  			\or
    	 \setstretch {1.433}
  			\fi								}				
	\newcommand{\MSdoublespacing}{
 		\setstretch {1.92}
 			\ifcase \@ptsize \relax
    	\setstretch {1.936}
  			\or
    	\setstretch {1.866}
  			\or
    	\setstretch {1.902}
  			\fi								}
	\MSonehalfspacing
\makeatother
\newtheorem{Satz}{Satz}[section]
\newtheorem{definition}[Satz]{Definition}     
\newtheorem{lemma}[Satz]{Lemma}	
\newtheorem{remark}[Satz]{Remark}	
  
\newtheorem{theorem}[Satz]{Theorem}
\newtheorem{proposition}[Satz]{Proposition}                     
\numberwithin{equation}{section}

\begin{document}
\pagestyle{plain}
\title{Constructive Winning Breaker Strategies in the Maker-Breaker $C_k$-Game}
\author{Matthias Sowa and Anand Srivastav \\ %
	Department of Mathematics, Kiel University,\\
	\{sowa,srivastav\}@math.uni-kiel.de
}
\date{}
\maketitle

\maketitle

\begin{abstract}
Maker-Breaker subgraph games are among the most famous combinatorial games. For $n, q \in \mathbb{N}$ and a fixed 
	subgraph $C$ of the complete graph $K_n$, the two players, 
	called Maker and Breaker, alternately claim edges of $K_n$. 
	Maker claims one previously unclaimed edge per round and Breaker may claim up to $q$ edges per round. 
	If Maker is able to claim all edges of a copy of $C$, he wins the game. Otherwise Breaker wins. 
	Bednarska and Łuczak (2000) determined in a landmark work the exact asymptotics of the treshold bias
	as $\Theta(n^{1/m(C)})$ where $m(C)$ is the 2-density of $C$, analysing suitable random strategies. 
	Since then it has been a major open problem to determine the treshhold bias, if it exists, with
	 corresponding optimal strategies,
	leading to sharp constants in the $\Theta$-notion.
	A famous special case is the triangle game ($C = C_3$) studied by Chvatal and Erdös (1978), 
	who showed that Maker wins if $q \le \sqrt{2n}$
	while Breaker wins if $q \ge 2\sqrt{n}$. Glazik and Srivastav (2022) proved with a new potential function
	method that Breaker wins even if 
	$q \ge \sqrt{8/3}\sqrt{n}$, coming quite close to the $\sqrt{2}\sqrt{n}$ lower bound
	($n$ is sufficiently large).
	Joel Spencer (2019) conjectured that this method might be generalizable to arbitrary subgraphs $C$.
	We prove that this conjecture is true, presenting a general winning strategy for Breaker
	if the potential function fullfils certain conditions depending on $C$.
	With this result we give the first constructive (polynomial-time)
	strategies for Breaker in the $k$-cycle Maker-Breaker game 
	for arbitrary, but fixed  $k \geq 4$: Breaker wins if $ q>\sqrt[k-1]{(k-1)\big(\frac{2(k-1)}{k}\big)^{k-2}n^{k-2}}$. 
	By Bednarska and Łuczak (2000) our bound is asymptotically optimal. 
	However, our constants are by magnitudes 
	better than those arising from their random strategies. More recently, 
	Sowa and Srivastav (2025) gave the first constructive Maker
	strategy for $C_4$. Our work may motivate the study of Maker strategies for $C_k, k \ge 5$, narrowing the
	gap towards the Breaker bounds presented.
\end{abstract}

\section{Introduction}
\subsection{The Maker-Breaker $C$-game}
Let $C$ be a fixed graph. The Maker-Breaker $C$-game is played on the edges of $K_n$, the complete graph on $n$ vertices. 
The players, called Maker and Breaker alternately claim edges of $K_n$ until each 
edge is claimed by one of the players. 
In each round of the game Maker claims one edge and Breaker claims up to $q$ edges. 
We call $q$ the bias of the game. If the graph of Maker's edges contains a copy of $C$ at the end of the game, 
Maker wins. Otherwise Breaker wins. This is a game of perfect information with no draw, 
so either Maker or Breaker has a winning strategy.
Natural questions are to find the minimum $q$ as a fuction of $n$ such that Breaker 
has a winning strategy and the maxium $q$ dependig on $n$ such that Maker has a winning strategy.
The treshold bias of the game $q_C$ is attained if minimum and maximum coincide, but the proof of 
its existence
is still a challenging open problem, unsolved for any $C$ containing a cycle.
For a comprehensive  introduction to positional games we refer to the monography of 
Hefetz, Krivelevich, Stojaković and Szabó \cite{Hef}, the early papers of Beck \cite{Beck, Beck1982} and his 
foundational work \cite{Beck-Tic}. 
A recent and interesting variant
are so called phantom Maker-Breaker games introduced 
by Clemens, Hamann, Mikalacki, Mogge and Stojaković \cite{Cle}
with random strategies for both players.
\subsection{Previous work}
Bednarska and Łuczak \cite{Bednarska-Luczak} proved that if $C$ contains three non-isolated vertices, 
then there exist constants $c_1,c_2>0$ such that, for sufficiently large $n$, 
Maker wins if $q\leq c_1 n^{1/m(C)}$ and Breaker wins if $q\geq c_2 n^{1/m(C)}$, where
\[
m(C):= \max \big\{  \frac{\vert E(H)\vert-1}{\vert V(H)\vert-1}: H \text{ is a subgraph of }C,\,\vert V(H)\vert \geq 3 \big\}\,.
\]
They conjectured that $c_1$ and $c_2$ could be chosen arbitrarily close to each other, 
but this conjecture remains open for any $C$ containing a cycle. 
In a Bachelor thesis supervised by the authors Lena C. Wolos \cite{Wolos}
calculated upper bounds for $c_1$ resp. lower bounds for $c_2$. 
The gap is quite large: if $C = C_4$, the 4-cycle, then
$c_1 \le 10^{-6}$ and $c_2 \ge 10^{39}$, and the gap increases for more complicated subgraphs $C$. 

In the case of $C=C_3$, Chvátal and Erdős \cite{Chvatal-Erdos} 
showed that Maker wins if $q \le \sqrt{2n}$ while Breaker wins if $q \ge 2\sqrt{n}$. With a randomized Breaker strategy an improvement of the Breaker bias to $q \ge (2 - 1/24)\sqrt{n}$ was given by Balogh and Samotij \cite{Balogh2011}. Glazik and Srivastav \cite{Glazik} gave a winning strategy for Breaker for a bias $q> \sqrt{8/3}\sqrt{n}$, almost reaching the Maker lower bound  $q \le \sqrt{2}\sqrt{n}$. 
Recently, Sowa and Srivastav \cite{Sowa-Sriv} gave the first constructive Maker strategy for winning the $C_4$-game, if $q \le 0.16 n^{2/3}$. Subsequently, Sowa \cite{Sowa} generalized their approach and established a winning strategy for Maker in the $C_6$-game whenever $q \le 0.007 n^{4/5}$.
We would like to
to emphasize that the graph $C$ is fixed, not depending on $n$. 
So cycles for example have a fixed length $k$. The szenario
changes if $C$ depends on $n$, say $C$ is a Hamiltion cycle, or one wish to guarantee connectivity. 
Here other methods are required. For the Hamilton cycle game we refer to 
Krivelevich \cite{Kri} and for the connectivity game to Gebauer and Szabó \cite{GebSz} resp. 
Hefetz, Mikalački, and Stojaković \cite{Hef1},

\subsection{Our contribution}
During a visit of Kiel University in 2019 Joel Spencer raised the conjecture that the potential function method of
Glazik and Srivastav \cite{Glazik} could be generalized to find new Breaker strategies for Maker-Breaker subraph games.
We show that this in fact is true, by presentig a winning strategy for Breaker for any subgraph $C$, if the potential
function fullfils some essential properties depending on $C$ (Theorem 2.6). Extracting such properties from the approach
of Glazik and Srivastav \cite{Glazik} which is very much focussed on triangles ($C = C_3$) is a major challenge. 
To apply this theorem, we design a potential function for the $C_k$-game and obtain 
the first constructive winning strategies for Breaker in the $k$-cycle game, $k>3$, for a bias
$ q>\sqrt[k-1]{(k-1)\big(\frac{2(k-1)}{k}\big)^{k-2}n^{k-2}}$ (Theorem \ref{Theorem: Breaker wins Ck}). Note that 
the constants in our bias are
significantly better than those known from \cite{Bednarska-Luczak}.
The following table summarizes the state-of-the-art in this context.

\begin{table}[ht]
\centering
\caption{Known upper and lower bounds for the bias}
\label{tab:bounds}

\newcommand{\cell}[2]{%
\parbox[c][3.2em][c]{4cm}{%
\centering
#1\\[0.2em]
{\scriptsize #2}%
}%
}

\begin{tabular}{lcc}
\toprule
$k$ & Lower bound & Upper bound \\
\midrule

\addlinespace[0.5em]
$3$
& \cell{$\sqrt{2}\,n^{1/2}$}{Chvátal-Erdős \cite{Chvatal-Erdos}}
& \cell{$\sqrt{\frac{8}{3}}\,n^{1/2}$}{Glazik,Srivastav \cite{Glazik}} \\
\addlinespace[1.2em]

$4$
& \cell{$ 0.16\,n^{2/3}$}{Sowa,Srivastav \cite{Sowa-Sriv}}
& \cell{$ 1.89\,n^{2/3}$}{Theorem \ref{Theorem: Breaker wins Ck}} \\
\addlinespace[1.2em]

$5$
& \cell{$ \Omega(n^{3/4})$}{Bednarska, Łuczak \cite{Bednarska-Luczak}}
& \cell{$ 2.02\,n^{3/4}$}{Theorem \ref{Theorem: Breaker wins Ck}} \\
\addlinespace[1.2em]

$6$
& \cell{$ 0.007\,n^{4/5}$}{Sowa \cite{Sowa}}
& \cell{$ 2.08\,n^{4/5}$}{Theorem \ref{Theorem: Breaker wins Ck}} \\
\addlinespace[0.5em]

\bottomrule
\end{tabular}
\end{table}

\section{The Potential Strategy Theorem}
The Maker-Breaker $C$-game is easy to understand. However, for the formulation of our
potential function, the strategy based on it, and its analysis, we need a formal framework.
\subsection{Formal Definition of the Game}
\begin{definition}\textbf{(Maker-Breaker Subgraph-Game)}\\
For a set $X$ let $\mathcal{P}(X) := \{ A : A \subseteq X \}$ denote the power set of $X$. For $n \in \mathbb{N}$ let
$V_n := [n]$
be the set of vertices and
$E_n := \{ e \in \mathcal{P}([n]) : |e| = 2 \}$
be the set of edges of the complete graph $K_n := (V_n,E_n)$. We define $\mathcal{E}_n := \mathcal{P}(E_n)$.
\begin{enumerate}
\renewcommand{\labelenumi}{(\roman{enumi})}
\item For $q \in \mathbb{N}$ we call a function $s : \mathcal{E}_n \times \mathcal{E}_n \to \mathcal{E}_n$ a $q$-strategy if $|s(M,B)| \leq q$ and
\[
s(M,B) \subseteq M \cup B \Rightarrow M \cup B = E_n
\]
for all $M,B \in \mathcal{E}_n$.
The set $s(M,B)$ contains the edges that a player claims when playing with strategy $s$, if $M$ is the Maker graph and $B$ is the Breaker graph before his turn. For technical reasons it is allowed that this set contains already claimed edges. In this case, only the unclaimed edges in $s(M,B)$ are claimed.
\item We call
\[
S(q) := \{ s : s \text{ is a } q\text{-strategy} \}
\]
the set of $q$-strategies and
\[
S := \bigcup_{q \in \mathbb{N}} S(q)
\]
the set of strategies.
\item For strategies $m,b \in S$ let
\[
M^0(m,b) := \emptyset, \quad B^0(m,b) := \emptyset.
\]
For all $t \in \mathbb{N}$ with $t \geq 1$ we recursively define
\begin{align*}
M^{t+1}(m,b) &:= M^t(m,b) \cup \big(m(M^t(m,b),B^t(m,b)) \setminus B^t(m,b)\big),\\
B^{t+1}(m,b) &:= B^t(m,b) \cup \big(b(M^{t+1}(m,b),B^t(m,b)) \setminus M^{t+1}(m,b)\big).
\end{align*}
$M^t(m,b)$ is the Maker graph and $B^t(m,b)$ is the Breaker graph after the $t$-th turn, if $m$ is the Maker strategy and $b$ is the Breaker strategy.
\item Further, for all $t \in \mathbb{N}$ and $m,b \in S$ we define
\[
G^t(m,b) := \big(M^t(m,b), B^t(m,b)\big).
\]
\item Since
$G^t(m,b) \neq G^{t+1}(m,b)$ or $M^t(m,b) \cup B^t(m,b) = E_n$ for all $t \in \mathbb{N}$, we have
\[
T := \{ t' \in \mathbb{N} : \forall t \in \mathbb{N} \setminus [t'],\; G^{t'}(m,b) = G^t(m,b) \} \neq \emptyset.
\]
Let
\[
\overline{t}(m,b) := \min(T), \quad G(m,b) := (G^t(m,b))_{t \in [\overline{t}(m,b)]}.
\]
$\overline{t}$ is the time at which the game ends because all edges are claimed by either player.
\item If $M^{\overline{t}(m,b)}(m,b)$ contains a copy of $C$, Maker wins the Maker-Breaker $C$-game, otherwise Breaker wins.
\item If Maker is allowed to claim only a single edge while Breaker may claim at most $q \in \mathbb{N}$ edges per turn, i.e.\ Maker chooses a strategy from $S(1)$ and Breaker from $S(q)$, we call the game the Maker-Breaker $C$-game with bias $q$.
\end{enumerate}
Most of the time we have fixed strategies $m$ and $b$ for Maker and Breaker respectively. In this case we usually omit them and write $M^t$ instead of $M^t(m,b)$, $\overline{t}$ instead of $\overline{t}(m,b)$, and so on.
\end{definition}

Now we introduce balance functions and potential functions.

\subsection{Balance Functions and Potential Functions}
\begin{definition}\textbf{(Obligatory and High-Potential Edges)}\label{obl and hp}
Let $C$ be a subgraph of $K_n$. We write $\mathcal{E} := \mathcal{E}_n$, $V := V_n$ and consider the Maker-Breaker $C$-game.
\begin{enumerate}
\renewcommand{\labelenumi}{(\roman{enumi})}
\item We call
\[
pot:(\mathcal{E},\mathcal{E},V)\to \mathbb{R}, \quad (M,B,v)\mapsto pot_{M,B}(v)
\]
a potential function and may think of $M$ and $B$ as the edge sets claimed by Maker and Breaker respectively.
\item For all $M,B \in \mathcal{E}$ and edges $e=\{v,w\}$ let
\[
pot_{M,B}(e) := pot_{M,B}(v) + pot_{M,B}(w).
\]
\item For all $M,B \in \mathcal{E}$ let
\[
obl(M,B) := \big\{ e\in E_n \setminus (M\cup B)\,:\, M\cup\{e\} \text{ contains a copy of } C \big\}
\]
be the set of obligatory edges.
\item For $X,Y\in \mathcal{E}$ and $Z \in \mathcal{E} \setminus \{\emptyset\}$ let
\[
maximal_{X,Y}(Z) \in \{z\in Z : \forall e\in Z,\; pot_{X,Y}(e) \leq pot_{X,Y}(z)\}
\]
be an edge with maximum potential in $Z$ with respect to $X$ and $Y$.
\item Let
\begin{align*}
\overline{f}(M,B) := \min \big\{\, &
q - |obl(M,B)|,\;
n-1-|M|-|B| - |obl(M,B)|
\big\}.
\end{align*}
For all $i \in [\overline{f}(M,B)]$ we recursively define $X(M,B)^i$ and $hp(M,B)^i$ as follows.
For $i=1$ set
\begin{align*}
X(M,B)^1 &:= B \cup obl(M,B),\\
hp(M,B)^1 &:= maximal_{M,X(M,B)^1}\big(E_n \setminus (M \cup X(M,B)^1)\big).
\end{align*}
So $hp(M,B)^1$ is a (free) edge of $(E_n \setminus (M \cup X(M,B)^1)$ with maximum potential according to (iv).
If $i \geq 2$ and $X(M,B)^{i-1}$ and $hp(M,B)^{i-1}$ were already defined, set
\begin{align*}
X(M,B)^i &:= X(M,B)^{i-1} \cup \{hp(M,B)^{i-1}\},\\
hp(M,B)^i &:= maximal_{M,X(M,B)^i}\big(E_n \setminus (M \cup X(M,B)^i)\big).
\end{align*}
\item Finally,
\[
hp(M,B) := \{hp(M,B)^i : i \in [\overline{f}(M,B)]\}
\]
is the set of high-potential edges.
\end{enumerate}
\end{definition}

\begin{remark}
The set $obl(M,B)$ are the edges that Breaker has to claim immediately because otherwise Maker would win within his next turn. After he has claimed the obligatory edges, the unclaimed edge with the highest potential is $hp(M,B)^1$. If Breaker would also claim this edge and update the potentials, the new unclaimed edge with the highest potential would be $hp(M,B)^2$ and so on.
\end{remark}
\begin{definition}\textbf{(Potential Function Strategy)}
\begin{enumerate}
\renewcommand{\labelenumi}{(\roman{enumi})}
\item Given an arbitrary potential function $pot$, we call
\[
b : \mathcal{E} \times \mathcal{E} \to \mathcal{E}, \quad (M,B) \mapsto obl(M,B) \cup hp(M,B)
\]
the potential function strategy for Breaker with respect to the potential function $pot$.
\item Breaker plays according to this strategy if in each round he first claims every edge in $obl(M,B)$ and thereafter successively claims an edge with highest potential among the unclaimed edges, updating the potentials of all vertices and edges. He proceeds in this way until he has claimed $q$ edges, completing his moves in this round.
\end{enumerate}
\end{definition}
The preceding definitions formally describe a Breaker strategy with respect to a general potential function. We now specify a particular potential function for the Maker-Breaker $C$-game, which will lead to Breaker's win if the potential function satisfies certain properties.
\begin{definition}\textbf{(Balance Function)}\label{def: balance}
Let $G=(V,E)$ be a graph, $\theta,q \in \mathbb{R}$, and $I,J \subseteq \mathbb{R}$ with $0 \in I \cap J$.
\begin{enumerate}
\renewcommand{\labelenumi}{(\roman{enumi})}
\item We call $bal : I \times J \to \mathbb{R}_{>0}$ a balance function if there exists a function $b^* : \mathbb{R} \to \mathbb{R}$ such that for all $m \in I$ and $b \in J$ the following three conditions hold:
\begin{enumerate}
\item $bal(m,\cdot) : J \to \mathbb{R}_{>0}, \; z \mapsto bal(m,z)$ is strictly decreasing,
\item $bal(\cdot,b) : I \to \mathbb{R}_{>0}, \; z \mapsto bal(z,b)$ is strictly increasing,
\item $bal_0 := bal(0,0) = bal\big(m,b^*(m)\big) < 1.$
\end{enumerate}
\item We call
\[
d_{bal} : \mathcal{P}(E) \times \mathcal{P}(E) \times V \to \mathbb{R}, \quad (M,B,v) \mapsto b^*\big(\deg_M(v)\big) - \deg_B(v)
\]
the deficit function of $bal$.
\item For all $q \in \mathbb{R}$ we call the function
\[
pot_{bal,q,\theta} : \mathcal{P}(E) \times \mathcal{P}(E) \times V \to \mathbb{R}, (M,B,v) \mapsto pot_{bal,q,\theta}(M,B,v) 
\]
defined by
\[
pot_{bal,q,\theta}(M,B,v) :=
\begin{cases}
0, & \text{if } \deg_M(v)+\deg_B(v) = |V| - 1,\\[4pt]
(1+\theta)^{d_{bal}(M,B,v)/q}, & \text{otherwise}
\end{cases}
\]
the potential function induced by $(bal,q,\theta)$.
\end{enumerate}
\end{definition}

\begin{definition}\label{Balance-Potential Function}\textbf{(Balance-Potential Function)}
Let $C$ be a fixed graph. Let $\overline{n}\in\mathbb{N}$ and $\theta:\mathbb{N}\to\mathbb{R}$ be a decreasing function with ${\displaystyle\lim_{n\to\infty}\theta(n)=0}$ and $q:\mathbb{N}\to\mathbb{R}$ an arbitrary function. We consider the Maker-Breaker $C$-game with bias $q(n)$ on the complete graph $K_n=(V_n,E_n)$ for $n\in \mathbb{N}$. For all $n\in\mathbb{N}$ let $x_n \in \mathbb{R}$ and $bal_n:[0,x_n]\times \mathbb{R}\to \mathbb{R}_{>0}$ be a balance function such that for all $n>\overline{n}$, $v \in V_n$, any Maker graph $M$, and any Breaker graph $B$ the following three properties hold:
\begin{enumerate}
\renewcommand{\labelenumi}{(\roman{enumi})}
\item $(bal_n)_0 = (bal_{\overline{n}})_0$\\
\item $0<pot_{bal_n,q(n),\theta(n)}(M,B,v)\leq 2n \Rightarrow \deg_M(v) \neq \lceil x_n \rceil -1$\\
\item Suppose Breaker plays according to the potential function strategy with respect to the potential function $pot_{bal_n,q(n),\theta(n)}$ and $M$ is the graph of Maker, $B$ is the graph of Breaker respectively and it holds $\Delta(M) < x_n$, and $M'$ is the graph of Maker directly after his next move and $obl(v) := \{o\in obl(M',B)\vert \, v\notin o\}$ are the obligatory Breaker edges not incident in $v$, then the following conditions are satisfied:
\begin{enumerate}
	\renewcommand{\labelenumi}{(\roman{enumi})}
	\item $(bal_n)_0 \bigg( q(n)-\big\vert obl(M',B) \big\vert \bigg)\geq 1$\\
	\item $(bal_n)_0 \bigg( q(n)-\big\vert obl(v) \big\vert \bigg)\geq d_{bal_n}(M',B,v)-d_{bal_n}(M,B,v)$
\end{enumerate}
\end{enumerate}
Then we call the potential function $pot_{bal_n,q(n),\theta(n)}$ a balance-potential induced by $(bal_n)_{n\in \mathbb{N}}$.
\end{definition}

We are now ready to state the central potential function strategy theorem.
\begin{theorem}\textbf{(Breaker's Win)}\label{Theorem: Breaker's win}
If Breaker plays the $C$-subgraph-game according to the potential function strategy with respect to a balance-potential, then there exists $N \in \mathbb{N}$ such that he wins the game for all $n > N$.
\end{theorem}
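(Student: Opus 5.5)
We will prove Theorem~\ref{Theorem: Breaker's win} by following the Glazik--Srivastav scheme. The idea is to show that, under the potential function strategy, the maximum Maker degree always stays below $x_n$, that Breaker can always claim all obligatory edges, and that these two facts together prevent a copy of $C$.

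Fix $n>\overline{n}$ and write $pot=pot_{bal_n,q(n),\theta(n)}$. The proof proceeds in four steps.

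\textbf{Step 1 (reduction).} Breaker wins as long as in every round he can claim all of $obl(M',B)$. By Definition~\ref{Balance-Potential Function}(iii)(a) we have $(bal_n)_0(q-|obl(M',B)|)\ge 1$, and $(bal_n)_0<1$, so $|obl(M',B)|<q$. This inequality, however, is guaranteed only while $\Delta(M)<x_n$. The whole argument therefore reduces to the following claim.

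\emph{Claim.} Throughout the game $\Delta(M)<x_n$, equivalently $\deg_M(v)\le\lceil x_n\rceil-1$ for every vertex $v$.

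\textbf{Step 2 (total potential).} We argue by induction over rounds, using the total potential $\Phi=\sum_{v}pot(M,B,v)$. At the start every deficit equals $b^*(0)$, so $\Phi_0=n(1+\theta)^{b^*(0)/q}$. We aim to show that $\Phi$ does not increase over a full round, Maker's move followed by Breaker's response. Maker's edge $\{u,w\}$ raises $d(u)$ and $d(w)$, and hence multiplies $pot(u)$ and $pot(w)$. Breaker's $q$ edges are chosen greedily by maximal edge potential.

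Condition (iii)(b) is what makes this work. It says that the deficit increase caused by Maker at $v$ is at most $(bal_n)_0$ times the number of non-obligatory Breaker edges available in that round (those not incident to $v$). The mechanism is as follows:
\begin{itemize}
\item By the greedy rule, each high-potential edge Breaker claims has potential at least $pot(u)+pot(w)$ after the update.
\item Claiming an edge at $v$ lowers $d(v)$ by $1$, so $pot(v)$ drops by a factor $(1+\theta)^{-1/q}$, a decrease of roughly $\frac{\theta}{q}pot(v)$.
\item Maker's increase of $d(v)$ by $\Delta d$ raises $pot(v)$ by roughly $\frac{\theta\,\Delta d}{q}pot(v)$.
\end{itemize}
Summing over Breaker's $q-|obl|$ greedy edges, and using (iii)(b) together with the balance-function properties (a)--(c), which relate $b^*$ to $bal_0$, should show that Breaker's decrease dominates Maker's increase up to error terms of order $\theta^2$. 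The obligatory edges are accounted for separately: they only lower potentials, and the $obl(v)$ refinement handles the edges incident to $v$. Taking $n$ large makes $\theta(n)$ small enough that these second-order terms are absorbed, yielding $\Phi_t\le\Phi_0\,(1+o(1))\le 2n$, say. This is where the choice $N\ge\overline{n}$ enters.

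\textbf{Step 3 (degree bound from the potential bound).} Since $\Phi\le 2n$, every vertex $v$ that still has free edges satisfies $0<pot(v)\le 2n$. Condition (ii) then gives $\deg_M(v)\ne\lceil x_n\rceil-1$. Degrees increase by one per Maker edge, so Maker's degree at $v$ can never reach $\lceil x_n\rceil-1$ while $v$ has free edges. Vertices with potential $0$ are saturated and can receive no new Maker edges. Hence the Claim holds, the hypothesis of (iii) persists by induction, and Step 1 concludes the proof.

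\textbf{Main obstacle.} The hardest part is Step 2, the potential-decrease computation. It requires relating the greedy choice of $hp$ edges, whose potentials shift as Breaker plays during his move, to Maker's gain at the two endpoints. I would first imitate the triangle argument: compare each greedy edge against the edge $\{u,w\}$ and its neighbourhood, use monotonicity of $bal$ to convert deficit changes into potential factors, and bound $(1+\theta)^{x/q}$ by first-order expansions valid because deficits are $O(q)$. It remains to verify carefully that (iii)(b) gives exactly the needed per-vertex inequality.
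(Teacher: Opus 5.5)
\textbf{Step 2 has a genuine gap.} Your Steps 1 and 3 match the end of the paper's argument. Step 2, however, is the entire content of the theorem, and you leave it at ``should show''. It rests on a false premise and aims at a statement that does not hold under the hypotheses.

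You assert that each greedy edge has potential at least $pot(u)+pot(w)$. But $\{u,w\}$ now belongs to Maker, and maximality only guarantees that each $hp$ edge dominates the edges that are \emph{still free}. Those may all have potential far below $pot_{M,B}(e)$. Even when $u$ and $w$ keep free edges, the greedy rule only guarantees roughly $\max\{pot(u),pot(w)\}$, about half of $pot_{M,B}(e)$.

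Compare the two sides of a round:
\begin{itemize}
\item Maker's gain can be as large as $(\mu^{bal_0\overline{f}/q}-1)\,pot_{M,B}(e)\approx bal_0\,\theta\,\tfrac{\overline{f}}{q}\,pot_{M,B}(e)$ (Lemma~\ref{Lemma: pot change by M 2}).
\item $\overline{f}$ greedy edges of potential about $p$ remove only about $\theta\,\tfrac{\overline{f}}{q}\,p$.
\end{itemize}
The hypotheses allow $bal_0$ arbitrarily close to $1$. In the $C_k$ application, $bal_0>\bar\beta/\beta$ with $\beta$ just above $\bar\beta$. So the total potential can genuinely increase in a single round, and no per-round monotonicity is available, even modulo $O(\theta^2)$ terms. (A minor point: $b^*(0)=0$ by (c) and strict monotonicity of $bal(0,\cdot)$, so $\Phi_0=n$.)

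\textbf{The missing idea is amortization over critical turns} (Definition~\ref{Def: critical turn}). The paper's argument runs as follows.
\begin{itemize}
\item It splits each round's change into $\Delta=\Delta^++\Delta^{h+}+(1-\eta)\Delta^{hp}$ plus a nonpositive remainder $r$.
\item Lemma~\ref{Lemma: pot change in critical turns}: whenever $\Delta>0$, every edge still free afterwards has potential $<\frac{\mu\,bal_0}{1-\eta}\,pot(e^t)$. The reason is that the greedy edges dominated every surviving free edge, yet their total decrease was too small.
\item Hence the potential of Maker's edge shrinks geometrically from one critical turn to the next, with ratio $\frac{(1+\epsilon)\mu\,bal_0}{1-\eta}<1$.
\item Start at the last time $t_0-1$ with $POT\le n$. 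Run until either $pot(u)$ has dropped by a factor $1-\gamma$, or some vertex potential has grown by a factor $1+\epsilon$. In this window there are at most a constant $c$ critical turns, so Maker's cumulative gain is at most $2c\theta\,pot(u)$.
\item The reserved $\eta$-share of Breaker's decrease pays for this gain in both cases, which contradicts the choice of $t_0$.
\end{itemize}
This is what yields $POT<2n$ for all $t$. Only after that does your Step 3 go through.
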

The proof of Theorem~\ref{Theorem: Breaker's win} requires considerable conceptual and technical effort. 
For the reader's convenience we structure it in suitable subsections.

\subsection{Properties of the potential during the $C$-game}
We assume that Breaker plays the Maker-Breaker $C$-game according to the potential strategy with respect to a balance-potential $pot_{bal_n,q(n),\theta(n)}$ as defined in Definition~\ref{Balance-Potential Function}. Let $n_1 \in \mathbb{N}$ be a natural number with $\overline{n} < n_1$ and
\begin{equation}
\theta(n_1) < \min\left\{ \frac{1}{3}, \frac{1}{(bal_{n_1})_0} - 1 \right\}.
\end{equation}
Further let $\gamma \in (0,1)$, $\eta \in \big(0, 1 - (bal_{n_1})_0(1+\theta(n_1))\big)$ and $\epsilon \in \left(0,\frac{1}{2}\right)$ with
\begin{equation}\label{eq: bal0<...}
\frac{1-\eta}{(1+\epsilon)\big(1+\theta(n_1)\big)(bal_{n_1})_0} > 1.
\end{equation}
For all $n \in \mathbb{N}$ define
\begin{equation}\label{def: c}
c(n) := \left\lceil
\frac{1-\ln(1-\gamma)}
{\ln(1-\eta)-\ln(1+\epsilon)-\ln\big((1+\theta(n))(bal_n)_0\big)}
\right\rceil.
\end{equation}
Since $c(n)$ is bounded and $\lim_{n\to\infty}\theta(n)=0$, we can find $n \in \mathbb{N}$ with $n_1 < n$ and
\begin{equation}\label{eq: n big}
2\,c(n)\,\theta(n) <
\min\Big\{
\eta\gamma,\;
\eta(1-\gamma)\big(1-(1+\epsilon)^{-1/(bal_{n_1})_0}\big)
\Big\}.
\end{equation}
Since $n$ is fixed for the remainder of this proof we omit the dependence of $\theta$ on $n$ and write $\theta$ instead of $\theta(n)$. Accordingly we write $V,E,\mathcal{E},x,bal,q,c$ instead of $V_n,E_n,\mathcal{E}_n,x_n,bal_n,q_n,c(n)$. Let $\mu := 1+\theta$ and let $b^*$ be as in Definition~\ref{def: balance}(i).
For all $M,B \in \mathcal{P}(E)$ set
\begin{equation}\label{def: d short}
d_{M,B} : V \to \mathbb{R}, \quad v \mapsto d_{bal}(M,B,v),
\end{equation}
\begin{equation}\label{def: pot short}
pot_{M,B} : V \to \mathbb{R}, \quad v \mapsto pot_{bal,q,\theta}(M,B,v),
\end{equation}
\begin{equation}
POT_{M,B} := \sum_{v \in V} pot_{M,B}(v).
\end{equation}
Let $b$ be Breaker's potential function strategy with respect to $pot := pot_{bal,q,\theta}$ and let $m$ be an arbitrary Maker strategy. Since these strategies are fixed for the remainder of the proof we omit the dependence on them and write $\overline{t} := \overline{t}(m,b)$ and for each $t \in [\overline{t}]$ we write $M^t := M^t(m,b)$, $B^t := B^t(m,b)$. Assume $e^t \in E$ is the edge with $\{e^t\} = m(M^t,B^t)$, i.e.\ $e^t$ is the edge chosen by Maker in turn $t$.

In the following definition we split up the change in the overall potential $POT$ during a single turn into several parts.
\begin{definition}
For all $M,B \in \mathcal{E}$ let
\begin{equation} 
M'  := M   \cup m(M,B),
B^c := B   \cup obl(M',B),
B'  := B^c \cup hp(M',B)\,.
\end{equation}
For $u,v,w \in V$ with $\big\{\{u,v\}\big\}=m(M,B)$ we define
\begin{enumerate}
\renewcommand{\labelenumi}{(\roman{enumi})}
\item
\begin{equation} \label{def: Delta^+ (w)}
{\Delta}_{M,B}^+(w) := 
\begin{cases}
\mu^{\frac{d_{M',B}(w)}{q}} - \mu^{\frac{d_{M,B}(w)}{q}} & \text{, if } w\in\{u,v\} \\
0 & \text{, otherwise}
\end{cases}\,,
\end{equation}
${\Delta}_{M,B}^+(w)$ is the share of the change in the potential of $w$ caused by Maker's move.
\item
\begin{equation} \label{def: Delta^heads (w)}
{\Delta}_{M,B}^{heads}(w) := 
\begin{cases}
\mu^{\frac{d_{M',B^c}(w)}{q}} - \mu^{\frac{d_{M',B}(w)}{q}} & \text{, if } w\in\{u,v\} \\
0 & \text{, otherwise}
\end{cases}\,,
\end{equation}
${\Delta}_{M,B}^{heads}(w)$ is the share of the change in the potential of $w$ caused by obligatory edges of Breaker if $w$ is one of the vertices in the edge claimed by Maker.

\item
\begin{equation} \label{def: Delta^tails (w)}
{\Delta}_{M,B}^{tails}(w) := 
\begin{cases}
0 & \text{, if } w\in\{u,v\} \\
\mu^{\frac{d_{M',B^c}(w)}{q}} - \mu^{\frac{d_{M',B}(w)}{q}} & \text{, otherwise}
\end{cases}\,,
\end{equation}
${\Delta}_{M,B}^{tails}(w)$ is the share of the change in the potential of $w$ caused by obligatory edges of Breaker if $w$ is contained in the edge claimed by Maker.

\item
\begin{equation} \label{def: Delta^hp (w)}
{\Delta}_{M,B}^{hp}(w) := 
\mu^{\frac{d_{M',B'}(w)}{q}} - \mu^{\frac{d_{M',B^c}(w)}{q}}\,,
\end{equation}
${\Delta}_{M,B}^{hp}(w)$ is the share of the change in the potential of $w$ caused by high-potential edges of Breaker.

\item
\begin{align}\label{def: Delta^0 (w)}
{\Delta}_{M,B}^{0}(w) := 
pot_{M',B'}(w)-pot_{M,B}(w)-{\Delta}_{M,B}^{+}(w) \notag \\
-{\Delta}_{M,B}^{heads}(w)-{\Delta}_{M,B}^{tails}(w)-{\Delta}_{M,B}^{hp}(w)\,,
\end{align}
${\Delta}_{M,B}^{0}(w)$ is the share of the change in the potential of $w$ caused by setting the potential to zero if all edges incident in $w$ were claimed.

\item
\begin{equation} \label{def: Delta^h+ (w)}
{\Delta}_{M,B}^{h+}(w) := 
\begin{cases}
{\Delta}_{M,B}^{heads}(w) 
& \text{, if } {\Delta}_{M,B}^{+}(w)+{\Delta}_{M,B}^{heads}(w) >0 \\
-{\Delta}_{M,B}^{+}(w) 
& \text{, otherwise}
\end{cases}\,,
\end{equation}

\begin{equation} \label{def: Delta^h- (w)}
{\Delta}_{M,B}^{h-}(w) := 
\begin{cases}
0 \,\,\,\,\,\,\,\,\,\,\,\,  \text{, if } {\Delta}_{M,B}^{+}(w)+{\Delta}_{M,B}^{heads}(w) >0 \\
{\Delta}_{M,B}^{+}(w)+{\Delta}_{M,B}^{heads}(w)\,\,\,\,\,\,\, \text{, otherwise}
\end{cases}\,,
\end{equation}
${\Delta}_{M,B}^{h+}(w)$ and ${\Delta}_{M,B}^{h-}(w)$ are defined to ensure ${\Delta}_{M,B}^{h+}(w)+{\Delta}_{M,B}^{h-}(w)={\Delta}_{M,B}^{heads}(w)$, ${\Delta}_{M,B}^{+}(w)+{\Delta}_{M,B}^{h+}(w)\geq 0$, and ${\Delta}_{M,B}^{+}(w)+{\Delta}_{M,B}^{h-}(w)\leq 0$.
\item
\begin{equation} \label{def: Delta^+}
{\Delta}_{M,B}^+ := \sum_{w\in V}{\Delta}_{M,B}^+(w),\,
{\Delta}_{M,B}^{heads} := \sum_{w\in V}{\Delta}_{M,B}^{heads}(w),\,
\end{equation}
\begin{equation} \label{def: Delta^tails}
{\Delta}_{M,B}^{tails} := \sum_{w\in V}{\Delta}_{M,B}^{tails}(w),\,
{\Delta}_{M,B}^{hp} := \sum_{w\in V}{\Delta}_{M,B}^{hp}(w),\,
\end{equation}
\begin{equation}  \label{def: Delta^0}
{\Delta}_{M,B}^{0} := \sum_{w\in V}{\Delta}_{M,B}^{0}(w),\,
{\Delta}_{M,B}^{h+} := \sum_{w\in V}{\Delta}_{M,B}^{h+}(w),\,
\end{equation}
\begin{equation} \label{def: Delta^h-}
 {\Delta}_{M,B}^{h-} := \sum_{w\in V}{\Delta}_{M,B}^{h-}(w),\,
{\Delta}_{M,B} := {\Delta}_{M,B}^+ + {\Delta}_{M,B}^{h+} + (1-\eta){\Delta}_{M,B}^{hp},\,
\end{equation}
\begin{equation}\label{def: r}
r_{M,B} := {\Delta}_{M,B}^{h-} + {\Delta}_{M,B}^{tails} + \eta\,{\Delta}_{M,B}^{hp} +{\Delta}_{M,B}^{0}\,.
\end{equation}
\end{enumerate}
\end{definition}
\begin{definition}\textbf{(Critical Turn)}\label{Def: critical turn}
We call $t\in [\overline{t}-1]$ critical, if ${\Delta}_{M^t,B^t}>0$.
\end{definition}
\begin{proposition}\label{Prop: critical turns}
Let $M,B \in \mathcal{E}$ be Maker's graph and Breaker's graph respectively. Then the following holds:
\begin{enumerate}
\renewcommand{\labelenumi}{(\roman{enumi})}
\item $POT_{M',B'}-POT_{M,B} = {\Delta}_{M,B}^+ + {\Delta}_{M,B}^{heads} + {\Delta}_{M,B}^{tails} + {\Delta}_{M,B}^{hp}+ {\Delta}_{M,B}^{0}$
\item $ {\Delta}_{M,B}^{heads} =  {\Delta}_{M,B}^{h+} +  {\Delta}_{M,B}^{h-}$
\item $r_{M,B} \leq 0$
\item Every $t\in [\overline{t}-1]$ with $POT_{M^{t+1},B^{t+1}}-POT_{M^t,B^t}>0$ is critical.
\end{enumerate}
\end{proposition}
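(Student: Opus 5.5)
Parts (i) and (ii) are bookkeeping, and part (iv) follows from (i)–(iii); the real content is the sign claim (iii).

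\emph{Part (i).} For every $w\in V$ the definition of ${\Delta}^{0}_{M,B}(w)$ in (\ref{def: Delta^0 (w)}) gives
\[
pot_{M',B'}(w)-pot_{M,B}(w)={\Delta}^+_{M,B}(w)+{\Delta}^{heads}_{M,B}(w)+{\Delta}^{tails}_{M,B}(w)+{\Delta}^{hp}_{M,B}(w)+{\Delta}^{0}_{M,B}(w).
\]
Summing over $w$ yields (i).

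\emph{Part (ii).} I check the identity vertex by vertex and then sum. If ${\Delta}^+(w)+{\Delta}^{heads}(w)>0$, then ${\Delta}^{h+}(w)={\Delta}^{heads}(w)$ and ${\Delta}^{h-}(w)=0$. Otherwise ${\Delta}^{h+}(w)=-{\Delta}^+(w)$ and ${\Delta}^{h-}(w)={\Delta}^+(w)+{\Delta}^{heads}(w)$. In both cases the two pieces sum to ${\Delta}^{heads}(w)$.

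\emph{Part (iii).} I show that each summand of $r_{M,B}$ in (\ref{def: r}) is non-positive.
\begin{itemize}
\item ${\Delta}^{h-}(w)$ is $0$ in the first case. In the second case it equals ${\Delta}^+(w)+{\Delta}^{heads}(w)\le 0$ by the case condition.
\item ${\Delta}^{tails}(w)\le 0$. Claiming obligatory edges only raises $\deg_B(w)$, leaves $\deg_M(w)$ unchanged, and so lowers $d(w)=b^*(\deg_M(w))-\deg_B(w)$. Since $\mu>1$, $z\mapsto\mu^{z/q}$ is increasing.
\item ${\Delta}^{hp}(w)\le 0$ by the same argument, passing from $B^c$ to $B'$.
\item ${\Delta}^0(w)\le 0$. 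The exponential terms telescope, so
\[
{\Delta}^0(w)=pot_{M',B'}(w)-\mu^{d_{M',B'}(w)/q}-\bigl(pot_{M,B}(w)-\mu^{d_{M,B}(w)/q}\bigr).
\]
The first bracket is $0$ if $w$ is not saturated in $(M',B')$ and $-\mu^{d_{M',B'}(w)/q}<0$ if it is. The second bracket is $0$ unless $w$ was already saturated in $(M,B)$. In that case no edge at $w$ is claimed this turn, so $d_{M',B'}(w)=d_{M,B}(w)$ and both brackets equal $-\mu^{d(w)/q}$, giving ${\Delta}^0(w)=0$.
\end{itemize}
Since $\eta>0$, $\eta{\Delta}^{hp}\le 0$ as well, and hence $r_{M,B}\le 0$.

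\emph{Part (iv).} By (i) and (ii),
\[
POT_{M^{t+1},B^{t+1}}-POT_{M^t,B^t}={\Delta}_{M^t,B^t}+r_{M^t,B^t},
\]
using ${\Delta}^{hp}=(1-\eta){\Delta}^{hp}+\eta{\Delta}^{hp}$. Here I need that $(M^{t+1},B^{t+1})=((M^t)',(B^t)')$. This holds because Breaker plays $obl\cup hp$, and the removal of already claimed edges in the recursion does not affect this. If the left side is positive, then by (iii) ${\Delta}_{M^t,B^t}>0$, so $t$ is critical.

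\emph{Main obstacle.} The step needing care is ${\Delta}^0\le 0$ in (iii): one must handle vertices that were already saturated and check that the degree changes in $d$ go in the right direction for $M$ fixed. Both are monotonicity checks relying on $\mu>1$ and on the sign convention of $d_{bal}$.
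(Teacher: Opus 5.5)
Your proof is correct and follows the paper's own argument: (i) and (ii) by vertexwise summation and case checking, (iii) by showing each summand of $r_{M,B}$ is non-positive, and (iv) by writing the potential change as ${\Delta}_{M^t,B^t}+r_{M^t,B^t}$. You also supply details the paper leaves implicit, namely the telescoping that shows ${\Delta}^0_{M,B}(w)\in\{0,-\mu^{d_{M',B'}(w)/q}\}$, the identification $(M^{t+1},B^{t+1})=((M^t)',(B^t)')$, and the observation that $r\le 0$ suffices in (iv) (the paper states $r<0$ there, which it never proved).
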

\begin{proof}
\begin{enumerate}
\renewcommand{\labelenumi}{(\roman{enumi})}
\item follows from (\ref{def: Delta^0 (w)}) by summation over all vertices.
\item For all $w\in V$ by (\ref{def: Delta^h+ (w)}) and (\ref{def: Delta^h- (w)}) we have $${\Delta}_{M,B}^{h+}(w) +  {\Delta}_{M,B}^{h-}(w) =  {\Delta}_{M,B}^{heads}(w)$$ if ${\Delta}_{M,B}^{+}(w) +  {\Delta}_{M,B}^{heads}(w)>0$ and  $${\Delta}_{M,B}^{h+}(w) +  {\Delta}_{M,B}^{h-}(w) =-{\Delta}_{M,B}^{+}(w) + {\Delta}_{M,B}^{+}(w)+ {\Delta}_{M,B}^{heads}(w) =  {\Delta}_{M,B}^{heads}(w)$$ otherwise. (ii) follows by summation over all $w\in V$.
\item Since 
${\Delta}_{M,B}^{h-}$, ${\Delta}_{M,B}^{tails}$, and ${\Delta}_{M,B}^{hp}$ are changes in the potential caused by Breaker's edges, all these values are non-positiv. With ${\Delta}_{M,B}^{0}(v) \in \{0, -(1+\theta)^{{d_{bal}(M',B',v)}/q}\}$ for all $v \in V$, (iii) follows.
\item Let $t\in [\overline{t}-1]$ with $POT_{M^{t+1},B^{t+1}}-POT_{M^t,B^t}>0$. We have
\begin{flalign*}
{\Delta}_{M^t,B^t} &= {\Delta}_{M,B}^+ + {\Delta}_{M,B}^{h+} + (1-\eta){\Delta}_{M,B}^{hp} &(\text{by } (\ref{def: Delta^h-}))\\ 
&= {\Delta}_{M,B}^+ + {\Delta}_{M,B}^{heads} - {\Delta}_{M,B}^{h-} + (1-\eta){\Delta}_{M,B}^{hp} &(\text{by } (ii))\\ 
&= POT_{M^{t+1},B^{t+1}}-POT_{M^t,B^t} -r_{M,B} &(\text{by } (\ref{def: Delta^0 (w)}))\\ 
&>0\,. &(r_{M,B}<0)\\ 
\end{flalign*}
Therefore $t$ is critical.
\end{enumerate}
\end{proof}
In the following we pass through a series of Lemmata, revealing useful information about the change of the potential in single turns of the game.
\begin{lemma}\label{Lemma: pot change by M 1}
Let $M,B \in \mathcal{E}$ be Maker's graph and Breaker's graph respectively with $\deg_{M}(v) < x$ for all $v \in V$. Let $e \in E$ with $\{e\} = m(M,B)$ and let $\{u,v\} \in E$ with
\[
M' := M \cup \{\{u,v\}\}, \qquad B' := B \cup \{\{u,v\}\} \neq B.
\]
Then:
\begin{enumerate}
\renewcommand{\labelenumi}{(\roman{enumi})}
\item $pot_{M',B}(w) \leq \mu \, pot_{M,B}(w)$ for all $w \in V$.
\item $\Delta_{M,B}^+ \leq (\mu - 1)\, pot_{M,B}(e)$.
\item
\[
\sum_{w \in V} \big( \mu^{\frac{d_{M,B'}(w)}{q}} - \mu^{\frac{d_{M,B}(w)}{q}} \big)
\leq (\mu^{-\frac{1}{q}} - 1)\, pot_{M,B}(\{u,v\}).
\]
\end{enumerate}
\end{lemma}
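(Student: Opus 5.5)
The plan is to verify all three parts vertex by vertex. The key observation is that adding a single edge $\{u,v\}$ to $M$ or to $B$ changes $\deg_M$ or $\deg_B$ only at $u$ and $v$, and only by one. Hence the deficit $d_{M,B}(w)=b^*(\deg_M(w))-\deg_B(w)$ changes only at these two vertices. Throughout I read the statement with $e=\{u,v\}$, Maker's chosen edge, which is still unclaimed. Then neither $u$ nor $v$ is saturated, i.e.\ $\deg_M+\deg_B<|V|-1$ at both. Consequently $pot_{M,B}(u)=\mu^{d_{M,B}(u)/q}$ and $pot_{M,B}(v)=\mu^{d_{M,B}(v)/q}$ are given by the exponential branch of Definition~\ref{def: balance}(iii), with no zero case.

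For (i), vertices $w\notin\{u,v\}$ have $d_{M',B}(w)=d_{M,B}(w)$ and unchanged saturation status, so equality holds. If $pot_{M',B}(w)=0$ the bound is trivial since potentials are nonnegative. For $w\in\{u,v\}$ with $pot_{M',B}(w)\neq 0$, the claim is equivalent to $d_{M',B}(w)-d_{M,B}(w)\le q$. Here I invoke the balance-potential property (iii)(b) of Definition~\ref{Balance-Potential Function}, which applies because $\Delta(M)<x$. It gives $d_{M',B}(w)-d_{M,B}(w)\le (bal)_0\,(q-|obl(w)|)\le (bal)_0\, q< q$, using $(bal)_0<1$. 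If one prefers to avoid that property, one would instead need $b^*(m+1)-b^*(m)\le q$ directly, but the balance-potential condition is exactly designed to supply this. The main subtlety is checking that the hypotheses of that condition are met in the present setting: Breaker uses the potential strategy, $\Delta(M)<x$ holds, and $M'$ is the graph right after Maker's move. This is the step I expect to need the most care.

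For (ii), by definition $\Delta^+_{M,B}$ is supported on $\{u,v\}$. For $w\in\{u,v\}$ it equals $\mu^{d_{M',B}(w)/q}-\mu^{d_{M,B}(w)/q}\le (\mu-1)\mu^{d_{M,B}(w)/q}=(\mu-1)pot_{M,B}(w)$, by the exponent estimate from (i). Summing over $u$ and $v$ gives $(\mu-1)\,pot_{M,B}(e)$.

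For (iii), adding $\{u,v\}$ to $B$ raises $\deg_B$ by one exactly at $u$ and $v$. Thus $d_{M,B'}(w)=d_{M,B}(w)-1$ for $w\in\{u,v\}$, and $d_{M,B'}(w)=d_{M,B}(w)$ otherwise. The sum therefore reduces to $\sum_{w\in\{u,v\}}(\mu^{-1/q}-1)\mu^{d_{M,B}(w)/q}$. Since $\{u,v\}\notin B$ (as $B'\neq B$) and the edge is unclaimed, both endpoints are unsaturated, so $\mu^{d_{M,B}(w)/q}=pot_{M,B}(w)$. This yields $(\mu^{-1/q}-1)\,pot_{M,B}(\{u,v\})$, with equality in fact.
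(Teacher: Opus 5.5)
Your proof is correct and follows the paper's argument step for step. Parts (i) and (ii) rest on the exponent bound $d_{M',B}(w)-d_{M,B}(w)\le bal_0\,(q-|obl(w)|)\le q$, which comes from condition (iii)(b) of the balance-potential definition. Part (iii) is the same direct computation at the two endpoints of $\{u,v\}$.

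One point worth noting: the paper later applies (iii) with $\{u,v\}$ a high-potential edge and $B$ replaced by $X(M^{t+1},B^t)^i$, not with Maker's edge. Your argument for (iii) uses only that $\{u,v\}$ is unclaimed and not in $B$, so it covers that use verbatim.
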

\begin{proof}
\begin{enumerate}
\renewcommand{\labelenumi}{(\roman{enumi})}
\item Let $w \in V$ be arbitrary and let
\[
obl(w) := \{ o \in obl(M',B) \mid w \notin o \}.
\]
If $pot_{M,B}(w)=0$ or $pot_{M',B}(w)=0$, then (i) is trivial. Otherwise, since $bal_0<1$, we obtain $q - |obl(w)| \geq d_{M',B}(w) - d_{M,B}(w)$ and $q \geq 1$ by Definition~\ref{Balance-Potential Function}(iii). It follows:
\begin{flalign*}
&& pot_{M',B}(w) - pot_{M,B}(w)
&= pot_{M,B}(w)\left(\frac{pot_{M',B}(w)}{pot_{M,B}(w)} - 1\right) &&\\
&&&= pot_{M,B}(w)\left(\mu^{\frac{d_{M',B}(w)}{q}} \mu^{-\frac{d_{M,B}(w)}{q}} - 1\right) &&\\
&&&= pot_{M,B}(w)\left(\mu^{\frac{d_{M',B}(w)-d_{M,B}(w)}{q}} - 1\right) &&\\
&&&\leq pot_{M,B}(w)\left(\mu^{\frac{q-|obl(w)|}{q}} - 1\right) &&\\
&&&\leq pot_{M,B}(w)(\mu - 1). &&
\end{flalign*}
\item For $v_1,v_2 \in V$ with $e=\{v_1,v_2\}$, by the same argument as above we have:
\begin{flalign*}
&& \Delta_{M,B}^+
&= \sum_{i \in [2]} \Delta_{M,B}^+(v_i)
= \sum_{i \in [2]}\left(\mu^{\frac{d_{M',B}(v_i)}{q}} - \mu^{\frac{d_{M,B}(v_i)}{q}}\right) &&\\
&&&= \sum_{i \in [2]} \left( \mu^{\frac{d_{M,B}(v_i)}{q}}
\left(\mu^{\frac{d_{M',B}(v_i)-d_{M,B}(v_i)}{q}} - 1\right) \right)&&\\
&&&= \sum_{i \in [2]} \mu^{\frac{d_{M,B}(v_i)}{q}}(\mu - 1)
= \sum_{i \in [2]} pot_{M,B}(v_i)(\mu - 1) &&\\
&&&= pot_{M,B}(e)(\mu - 1). &&
\end{flalign*}
\item For any $w \in \{u,v\}$ we have:
\begin{flalign*}
&& \mu^{\frac{d_{M,B'}(w)}{q}} - \mu^{\frac{d_{M,B}(w)}{q}}
&= \mu^{\frac{d_{M,B}(w)}{q}}\left(\mu^{\frac{d_{M,B'}(w)-d_{M,B}(w)}{q}} - 1\right) &&\\
&&&= \mu^{\frac{d_{M,B}(w)}{q}}(\mu^{-\frac{1}{q}} - 1) &&\\
&&&= pot_{M,B}(w)(\mu^{-\frac{1}{q}} - 1). &&
\end{flalign*}
Since
\[
\mu^{\frac{d_{M,B'}(w)}{q}} - \mu^{\frac{d_{M,B}(w)}{q}} = 0
\]
for $w \notin \{u,v\}$, (iii) follows.
\end{enumerate}
\end{proof}
\begin{lemma}\label{Lemma: pot change by M 2}
Let $M,B \in \mathcal{E}$ be Maker's graph and Breaker's graph respectively with $\Delta(M) < x$. Let $e \in E$ with $\{e\} = m(M,B)$ and $M' := M \cup m(M,B)$. Then
\begin{enumerate}
\renewcommand{\labelenumi}{(\roman{enumi})}
\item
$\Delta_{M,B}^{+}(w) + \Delta_{M,B}^{h+}(w)
\leq \big( \mu^{\frac{bal_0 \overline{f}(M',B)}{q}} - 1 \big)\, pot_{M,B}(w)$ for all $w \in V$.
\item
$\Delta_{M,B}^{+} + \Delta_{M,B}^{h+}
\leq \big( \mu^{\frac{bal_0 \overline{f}(M',B)}{q}} - 1 \big)\, pot_{M,B}(e)$.
\end{enumerate}
\end{lemma}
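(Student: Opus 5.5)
The plan is to prove (i) vertex by vertex by splitting into cases according to the definition (\ref{def: Delta^h+ (w)}) of $\Delta^{h+}_{M,B}(w)$, and then to obtain (ii) by summing (i) over the two endpoints of $e$. Write $e=\{u,v\}$. First I will record that the right-hand side of (i) is nonnegative, i.e.\ $\overline{f}(M',B)\ge 0$. The first entry of the minimum is handled by Definition~\ref{Balance-Potential Function}(iii)(a), which gives $bal_0(q-|obl(M',B)|)\ge 1>0$. The second entry should be nonnegative as long as the game is still running, and I would check this separately. Given this, the case $w\notin\{u,v\}$ is immediate: both $\Delta^+_{M,B}(w)$ and $\Delta^{heads}_{M,B}(w)$ vanish, so $\Delta^{h+}_{M,B}(w)=0$ and the left-hand side is $0$. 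Likewise, whenever $\Delta^+_{M,B}(w)+\Delta^{heads}_{M,B}(w)\le 0$, the definition gives $\Delta^{h+}_{M,B}(w)=-\Delta^+_{M,B}(w)$, so again the left-hand side is $0\le$ RHS. Vertices with $pot_{M,B}(w)=0$ or potential becoming zero will be treated as trivial cases, exactly as in Lemma~\ref{Lemma: pot change by M 1}(i).

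The main case is $w\in\{u,v\}$ with $\Delta^+_{M,B}(w)+\Delta^{heads}_{M,B}(w)>0$. Here the sum telescopes:
\[
\Delta^+_{M,B}(w)+\Delta^{h+}_{M,B}(w)=\mu^{\frac{d_{M',B^c}(w)}{q}}-\mu^{\frac{d_{M,B}(w)}{q}}=pot_{M,B}(w)\Big(\mu^{\frac{d_{M',B^c}(w)-d_{M,B}(w)}{q}}-1\Big).
\]
So it suffices to show $d_{M',B^c}(w)-d_{M,B}(w)\le bal_0\,\overline{f}(M',B)$. Let $o_w$ denote the number of obligatory edges incident to $w$, so that $|obl(w)|=|obl(M',B)|-o_w$. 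By the definition of the deficit, $d_{M',B^c}(w)=d_{M',B}(w)-o_w$. Definition~\ref{Balance-Potential Function}(iii)(b) then gives
\[
d_{M',B^c}(w)-d_{M,B}(w)\le bal_0\big(q-|obl(M',B)|+o_w\big)-o_w=bal_0\big(q-|obl(M',B)|\big)-(1-bal_0)\,o_w\le bal_0\big(q-|obl(M',B)|\big),
\]
where the last step uses $bal_0<1$.

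The step I expect to be the main obstacle is the remaining comparison with the second entry of the minimum defining $\overline{f}(M',B)$, namely $n-1-|M'|-|B|-|obl(M',B)|$. My first attempt will be to argue that when this entry is the smaller one, only few free edges remain. In that regime either the potential of $w$ is (or becomes) zero, which is covered by the trivial case, or the degree increment $b^*(\deg_M(w)+1)-b^*(\deg_M(w))$ that drives the deficit change is itself bounded by the number of still-free edges. For the latter I would use monotonicity of $bal$ from Definition~\ref{def: balance}(i)(a),(b) together with the zero-potential convention. Once (i) holds for every $w$, part (ii) follows by summing over $w\in V$: only $u$ and $v$ contribute, and $pot_{M,B}(u)+pot_{M,B}(v)=pot_{M,B}(e)$.
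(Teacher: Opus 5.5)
You handle the main case exactly as the paper does. For $w\in e$ with $\Delta^{+}_{M,B}(w)+\Delta^{heads}_{M,B}(w)>0$ you write the left-hand side as $\bigl(\mu^{(d_{M',B^c}(w)-d_{M,B}(w))/q}-1\bigr)pot_{M,B}(w)$, insert $d_{M',B}(w)$, apply Definition~\ref{Balance-Potential Function}(iii)(b), and absorb the $o_w$ obligatory edges at $w$ using $bal_0<1$. This gives $bal_0\bigl(q-|obl(M',B)|\bigr)$, and (ii) follows by summing over the endpoints of $e$. The paper then writes $bal_0\bigl(q-|obl(M',B)|\bigr)=bal_0\,\overline{f}(M',B)$, citing Definition~\ref{obl and hp}(v). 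That definition only gives $\overline{f}(M',B)\le q-|obl(M',B)|$, which is the wrong direction. So the step you flag as the main obstacle is a genuine gap, and the paper passes over it too. Your sketched repair cannot close it, because in that regime the inequality can fail.

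Take a position in which $e=\{u,v\}$ is the only free edge and Maker claims it (e.g.\ Maker's last move), with $\Delta(M)<x$ and $\deg_M(u)+1\le x$.
\begin{itemize}
\item Then $obl(M',B)=\emptyset$ and $B^c=B$. The second entry of the minimum is at most $0$, so $\overline{f}(M',B)\le 0$ and the right-hand sides of (i) at $u$ and of (ii) are $\le 0$.
\item On the other hand $\Delta^{heads}_{M,B}(u)=0$, and $pot_{M,B}(u)>0$ because $e$ was free.
\item Also $\Delta^{+}_{M,B}(u)=pot_{M,B}(u)\bigl(\mu^{(b^*(\deg_M(u)+1)-b^*(\deg_M(u)))/q}-1\bigr)>0$. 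Conditions (a)--(c) of Definition~\ref{def: balance}(i) force $b^*$ to be strictly increasing on $[0,x]$. In the $C_k$ instance the increment is $bal_0\bigl(q-(k-2)\deg_M(u)x^{k-3}-\tfrac{k-2}{2}x^{k-3}\bigr)$, which is of order $q$ for small degrees.
\end{itemize}
Hence (i) at $u$ and (ii) both fail.

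Neither of your escape routes applies. $\Delta^{+}$ and $\Delta^{heads}$ are defined through raw powers $\mu^{d/q}$, so a potential that becomes zero does not affect them; that effect is booked in $\Delta^{0}$. And the deficit increment is a fixed positive amount with no relation to the number of remaining free edges.

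As literally printed, $n-1-|M'|-|B|-|obl(M',B)|$ is negative as soon as $|M'|+|B|\ge n$. Your claim that the second entry is nonnegative therefore needs the intended reading as ``free edges minus obligatory edges''. Otherwise even your case $w\notin e$ fails.

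What your computation, and the paper's, actually proves is (i) and (ii) with $\overline{f}(M',B)$ replaced by $q-|obl(M',B)|$. Equivalently, it proves them under the extra hypothesis that the minimum defining $\overline{f}(M',B)$ is attained by its first entry. You should state and prove the lemma in that form rather than try to treat the endgame case.
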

\begin{proof}
\begin{enumerate}
\renewcommand{\labelenumi}{(\roman{enumi})}
\item Let $w \in V$. If $\Delta_{M,B}^{h+}(w) = -\Delta_{M,B}^{+}(w)$, then (i) trivially holds. Otherwise we have $\Delta_{M,B}^{h+}(w) = \Delta_{M,B}^{heads}(w)$ and $\Delta_{M,B}^{+}(w) + \Delta_{M,B}^{heads}(w) > 0$ by (\ref{def: Delta^h+ (w)}). Therefore $w \in e$ and for $B^c := B \cup obl(M',B)$ and $obl(w) := \{o \in obl(M',B) \mid w \notin o\}$ we have
\begin{flalign*}
&& d_{M',B^c}&(w) - d_{M,B}(w)\\
&&&= d_{M',B^c}(w) - d_{M',B}(w) + d_{M',B}(w) - d_{M,B}(w) \\
&&&\leq bal_0\big(q - |obl(w)|\big) + d_{M',B^c}(w) - d_{M',B}(w)
&\llap{(\text{Def. }\ref{Balance-Potential Function}(iii))}\\
&&&\leq bal_0\big(q - |obl(w)| + d_{M',B^c}(w) - d_{M',B}(w)\big) \\
&&&= bal_0\big(q - |obl(w)| - |\{o \in obl(M',B) \mid w \in o\}|\big) \\
&&&= bal_0\big(q - |obl(M',B)|\big) \\
&&&= bal_0 \,\overline{f}(M',B). &\llap{(Definition \ref{obl and hp}(v))}
\end{flalign*}
Therefore
\begin{flalign*}
\Delta_{M,B}^{+}(w) + \Delta_{M,B}^{h+}(w)
&= \Delta_{M,B}^{+}(w) + \Delta_{M,B}^{heads}(w) \\
&= \mu^{\frac{d_{M',B^c}(w)}{q}} - \mu^{\frac{d_{M,B}(w)}{q}}
&\llap{(\text{by }(\ref{def: Delta^+ (w)}), (\ref{def: Delta^heads (w)}))}\\
&= \big(\mu^{\frac{d_{M',B^c}(w)-d_{M,B}(w)}{q}} - 1\big)\mu^{\frac{d_{M,B}(w)}{q}} \\
&= \big(\mu^{\frac{d_{M',B^c}(w)-d_{M,B}(w)}{q}} - 1\big) pot_{M,B}(w) \\
&\leq \big(\mu^{\frac{bal_0\,\overline{f}(M',B)}{q}} - 1\big) pot_{M,B}(w)
\end{flalign*}
and (i) is proved.
\item From (i) and
\[
\forall v \in V \setminus e:\quad \Delta_{M,B}^{+}(v) = \Delta_{M,B}^{h+}(v) = 0
\]
it follows directly that
\begin{align*}
\Delta_{M,B}^{+} + \Delta_{M,B}^{h+}
&= \sum_{v \in V}\big(\Delta_{M,B}^{+}(v) + \Delta_{M,B}^{h+}(v)\big)\\
&= \sum_{v \in e}\big(\Delta_{M,B}^{+}(v) + \Delta_{M,B}^{h+}(v)\big)\\
&\leq \big(\mu^{\frac{bal_0\overline{f}(M',B)}{q}} - 1\big) pot_{M,B}(e).
\end{align*}
\end{enumerate}
\end{proof}

\begin{lemma}\label{Lemma: pot change in critical turns}
Let $t \in [\overline{t}]$ be critical with $\Delta(M^t) < x$ and $e \in E_n \setminus (M^{t+1} \cup B^{t+1})$. Then
\[
pot_{M^{t+1},B^{t+1}}(e)
< \frac{\mu\, bal_0}{1-\eta}\, pot_{M^t,B^t}(e^t).
\]
\end{lemma}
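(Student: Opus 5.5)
The plan is to compare the two competing parts of $\Delta_{M^t,B^t}$. By Definition~\ref{Def: critical turn}, criticality means
\[
\Delta^+_{M^t,B^t}+\Delta^{h+}_{M^t,B^t} > -(1-\eta)\,\Delta^{hp}_{M^t,B^t}.
\]
Lemma~\ref{Lemma: pot change by M 2}(ii) bounds the left-hand side from above by $\big(\mu^{bal_0 f/q}-1\big)\,pot_{M^t,B^t}(e^t)$, where $f:=\overline{f}(M',B)$ with $M:=M^t$, $B:=B^t$ and $M'=M^{t+1}$. It therefore remains to show
\[
-\Delta^{hp}_{M,B}\ \ge\ f\,(1-\mu^{-1/q})\,pot_{M^{t+1},B^{t+1}}(e),
\]
and then to compare the two elementary factors in $\mu$.

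For the lower bound on $-\Delta^{hp}$ I write $\Delta^{hp}$ as a telescoping sum over the $f$ successive high-potential edges $hp^1,\dots,hp^f$ claimed after the obligatory ones. Here $f\ge 1$, since Definition~\ref{Balance-Potential Function}(iii)(a) gives $bal_0 f\ge 1$. By Lemma~\ref{Lemma: pot change by M 1}(iii), applied to the current graphs, claiming $hp^i=\{a,b\}$ changes $\sum_w\mu^{d(w)/q}$ by $(\mu^{-1/q}-1)\big(\mu^{d(a)/q}+\mu^{d(b)/q}\big)$. This is at most $(\mu^{-1/q}-1)$ times the current potential of $hp^i$, because the zeroed potential never exceeds $\mu^{d/q}$.

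Next, $hp^i$ is chosen as a maximizer of the current potential among free edges, and $e$ is free throughout, since it is unclaimed even at time $t+1$. Hence the current potential of $hp^i$ is at least the current potential of $e$. The latter is at least $pot_{M^{t+1},B^{t+1}}(e)$, because Breaker's further edges only lower deficits and Maker does not move again before time $t+1$. Moreover, $e$'s endpoints are not saturated, so no zeroing occurs at $e$. Summing over $i$ gives the bound displayed above.

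Combining the two bounds gives
\[
(1-\eta)\,f\,(1-\mu^{-1/q})\,pot_{M^{t+1},B^{t+1}}(e) < \big(\mu^{s}-1\big)\,pot_{M^t,B^t}(e^t), \qquad s:=bal_0 f/q .
\]
The remaining, and most delicate, step is the inequality
\[
\frac{\mu^{s}-1}{f\,(1-\mu^{-1/q})}\le \mu\, bal_0 .
\]
I would use the mean-value bounds $\mu^{s}-1\le s\,\mu^{s}\ln\mu$ and $1-\mu^{-1/q}\ge \mu^{-1/q}\ln\mu/q$. Together these give the ratio bound $bal_0\,\mu^{s+1/q}$, so it suffices that $s+1/q\le 1$, i.e.\ $bal_0 f+1\le q$. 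Since $f\le q$, this holds once $(1-bal_0)q\ge 1$, which is true for the large $n$ fixed above, as $q$ grows and $bal_0<1$ is constant. If this bookkeeping turns out too tight (for instance through the strictness of the inequality, or the case where $\overline{f}$ is determined by the number of free edges), I would fall back on the convexity bound $\mu^{s}-1\le s(\mu-1)$ and absorb the small excess using the slack in the choice of $n$.
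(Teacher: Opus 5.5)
Your reduction is the paper's own. The telescoping bound $-\Delta^{hp}_{M,B}\ge \overline{f}\,(1-\mu^{-1/q})\,pot_{M^{t+1},B^{t+1}}(e)$, obtained from maximality of each $hp^i$ and Lemma~\ref{Lemma: pot change by M 1}(iii), is combined with criticality and Lemma~\ref{Lemma: pot change by M 2}(ii) exactly as in the paper, and your handling of zeroed potentials is fine.

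The gap is in the final elementary inequality. Your mean-value estimates lose a factor $\mu^{1/q}$, through $1-\mu^{-1/q}\ge \mu^{-1/q}\ln\mu/q$. That forces you to require $bal_0 f+1\le q$, which you justify by ``$q$ grows''. But this lemma sits inside the proof of Theorem~\ref{Theorem: Breaker's win}, where $q(n)$ is an arbitrary function. The hypotheses only give $q\ge 1/bal_0$, from Definition~\ref{Balance-Potential Function}(iii)(a), and that does not imply $(1-bal_0)q\ge 1$ unless $bal_0\le 1/2$. Your fallback, as phrased, does not rescue this either. The constant $\mu\,bal_0/(1-\eta)$ in the statement is fixed, so there is no slack in the choice of $n$ into which an excess factor could be absorbed. (Strictness is not an issue: criticality already supplies it.)

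The repair needs no largeness of $q$. Put $s=bal_0 f/q$; since $f\le q$, we have $s\le bal_0<1$. The paper writes $\mu^{s}-1=\mu^{s}(1-\mu^{-s})\le \mu(1-\mu^{-s})$. It then applies Bernoulli's inequality $1-y^{\alpha}\le \alpha(1-y)$ with $y=\mu^{-1/q}$ and $\alpha=bal_0 f\ge 1$, which gives $\mu^{s}-1\le \mu\, bal_0 f\,(1-\mu^{-1/q})$.

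Your convexity idea also gives this bound exactly, provided you pair it with the right lower bound instead of a mean-value estimate. For $s\in[0,1]$, convexity gives $\mu^{s}-1\le s(\mu-1)=s\mu(1-\mu^{-1})$. Bernoulli with exponent $q\ge 1$ gives $1-\mu^{-1}\le q(1-\mu^{-1/q})$. Together these again give $\mu^{s}-1\le \mu\, bal_0 f\,(1-\mu^{-1/q})$. This variant does not even need $bal_0 f\ge 1$, only $f>0$ to divide.
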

\begin{proof}
For all $i \in [\overline{f}(M^{t+1},B^t)]$, since $X(M^{t+1},B^t)^i \subseteq B^{t+1}$ and Breaker always chooses an edge of maximal potential, we have
\begin{align*}
pot_{M^{t+1},B^{t+1}}(e)
&\leq pot_{M^{t+1},X(M^{t+1},B^t)^i}(e)\\
&\leq pot_{M^{t+1},X(M^{t+1},B^t)^i}\big(hp(M,B)^i\big).
\end{align*}
With Lemma~\ref{Lemma: pot change by M 1}(iii) it follows that
\begin{align*}
\Delta_{M^t,B^t}^{hp}
&= \sum_{w \in V}\Big(
\mu^{\frac{d_{M^{t+1},B^{t+1}}(w)}{q}}
- \mu^{\frac{d_{M^{t+1},X(M^{t+1},B^t)^{\overline{f}(M^{t+1},B^t)}}(w)}{q}}
\Big)\\
&\quad + \sum_{i \in [\overline{f}(M^{t+1},B^t)-1]}
\Big(
\mu^{\frac{d_{M',X(M^{t+1},B^t)^{i+1}}(w)}{q}}
- \mu^{\frac{d_{M',X(M^{t+1},B^t)^i}(w)}{q}}
\Big)\\
&\leq \sum_{i \in [\overline{f}(M^{t+1},B^t)]}
(\mu^{-\frac{1}{q}} - 1)\,
pot_{M^{t+1},X(M^{t+1},B^t)^i}\big(hp(M,B)^i\big)\\
&\leq \overline{f}(M^{t+1},B^t)(\mu^{-\frac{1}{q}} - 1)\,
pot_{M^{t+1},B^{t+1}}(e).
\end{align*}
Therefore we have
\begin{equation}\label{eq: f_bar}
\overline{f}(M^{t+1},B^t)(1-\mu^{-\frac{1}{q}})
pot_{M^{t+1},B^{t+1}}(e)
\leq -\Delta_{M^t,B^t}^{hp}.
\end{equation}
Note that for all $\alpha \geq 1$ we have
\begin{equation}\label{eq: alpha}
\alpha(1-\mu^{-1/q}) \geq 1 - \mu^{-\alpha/q}.
\end{equation}
Set $\alpha := bal_0 \overline{f}(M^{t+1},B^t)$. Note that by Definition~\ref{obl and hp}(v), Definition~\ref{def: balance}(i)(c), and Definition~\ref{Balance-Potential Function}(iii)(a) we have
\begin{equation}\label{eq: alpha<f_bar}
1 \leq \alpha \leq \overline{f}(M^{t+1},B^t) \leq q.
\end{equation}
Now
\begin{flalign*}
&& (1-\eta)\overline{f}(M^{t+1},B^t)&(1-\mu^{-1/q})pot_{M^{t+1},B^{t+1}}(e)
&&\\
&&&\leq -(1-\eta)\Delta_{M^t,B^t}^{hp}
&\llap{(\ref{eq: f_bar})}\\
&&&< \Delta_{M^t,B^t}^{+} + \Delta_{M^t,B^t}^{h+}
&\llap{($t$ is critical)}\\
&&&\leq (\mu^{\alpha/q}-1)pot_{M^t,B^t}(e^t)
&\llap{(Lemma~\ref{Lemma: pot change by M 2}(ii))}\\
&&&= \mu^{\alpha/q}(1-\mu^{-\alpha/q})pot_{M^t,B^t}(e^t)\\
&&&\leq \mu(1-\mu^{-\alpha/q})pot_{M^t,B^t}(e^t)
&\llap{(\ref{eq: alpha})}\\
&&&\leq \alpha\mu(1-\mu^{-1/q})pot_{M^t,B^t}(e^t).
&\llap{(\ref{eq: alpha<f_bar})}
\end{flalign*}
Therefore
\begin{flalign*}
&& pot_{M^{t+1},B^{t+1}}(e)
&< \frac{\alpha\mu}{(1-\eta)\overline{f}(M^{t+1},B^t)}\, pot_{M^t,B^t}(e^t)\\
&&&= \frac{bal_0\,\mu}{1-\eta}\, pot_{M^t,B^t}(e^t).
&\llap{(Definition of $\alpha$)}
\end{flalign*}
\end{proof}

\subsection{Bounding the overall Potential}
In the following we will show that if a $\hat{t} \in [\overline{t}]$ fulfills
\[
POT_{M^t,B^t} < 2n \quad \text{for all } t < \hat{t},
\]
it follows that $POT_{M^{\hat{t}},B^{\hat{t}}} < 2n$, which implies that the overall potential will never exceed the value $2n$, which is a crucial reason for the success of Breaker's strategy. Because the proof of this lemma is quite intricate, we split it into smaller lemmata.

\begin{definition}\label{Def: t_0}
\begin{enumerate}
\renewcommand{\labelenumi}{(\roman{enumi})}
\item Let $\hat{t} \in [\overline{t}]$ with $POT_{M^t,B^t} < 2n$ for all $t \in [\hat{t}-1]$.
Since $POT_{\emptyset,\emptyset} = n$, we have $\big\{ t \in [\hat{t}] : POT_{M^t,B^t} \leq n \big\} \neq \emptyset$.
Define
\[
t_0 := \max \big\{ t \in [\hat{t}] : POT_{M^t,B^t} \leq n \big\} + 1.
\]
Note that $t_0 - 1$ is a critical turn. Let $u,v \in V$ such that $\{u,v\} = e^{t_0-1}$ is the edge claimed by Maker in turn $t_0-1$ and
\[
pot_{M^{t_0-1},B^{t_0-1}}(v) \leq pot_{M^{t_0-1},B^{t_0-1}}(u).
\]
\item For all $t \in [\overline{t}]$,
\[
K(t) := \{ s \in [t-1] : t_0-1 \leq s \text{ and } s \text{ is critical} \}
\]
is the set of critical turns $s$ with $t_0-1 \leq s < t$. Let $k(t) := |K(t)|$. Let
\begin{align}\label{def: T1}
T_1 := \big\{ t\in \mathbb{N} : t_0 \leq t \leq \bar{t} \,\land\, pot_{M^t,B^t}(u) \leq (1-\gamma)pot_{M^{t_0-1},B^{t_0-1}}(u) \big\}\,.
\end{align}
\item
Let
\begin{align}\label{def: T2}
T_2 := \big\{ t\in \mathbb{N} : t_0 \leq t \leq \bar{t} \,\land \,
\exists w\in V \,\,\exists s\in \mathbb{N}\,\, t_0 \leq s < t\, \nonumber \\ 
\land \,\,
pot_{M^t,B^t}(w) \geq (1+\epsilon)pot_{M^s,B^s}(w) \big\}\,.
\end{align}
\item For all $i \in [2]$ set
\begin{align*}
t_i :=
\begin{cases}
\infty & \text{if } T_i = \emptyset,\\
\min(T_i) & \text{otherwise},
\end{cases}
\end{align*}
$t_* := \min\{t_1,t_2\}$ and
\[
R := \sum_{s=t_0-1}^{t_*-1} r_{M^s,B^s}.
\]
\end{enumerate}
\end{definition}

So $t_0-1$ is the last turn in which the overall potential is not larger than $n$. In the following it will be shown that between turn $t_0-1$ and turn $t_*$ there are at most $c$ critical turns and that in these $c$ critical turns the overall potential can not rise above $2n$, where $c$ is the constant defined in (\ref{def: c}). 

\begin{lemma}\label{Lemma: t0<t<t2}
Let $t \in \mathbb{N}$ with $t_0 \leq t < t_2$. Then it holds $POT_{M^t,B^t} < 2n$.
\end{lemma}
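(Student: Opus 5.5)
The plan is to compare the potential at time $t$ with the potential at time $t_0$, using the definition of $t_2$. Then I bound the potential at $t_0$ by the single turn $t_0-1$, starting from $POT_{M^{t_0-1},B^{t_0-1}}\leq n$.

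\emph{Step 1: the turn $t_0-1$.} First I need $\Delta(M^{t_0-1})<x$ so that Lemma~\ref{Lemma: pot change by M 1} applies. I would get this by induction over $s\leq t_0-1$, using $POT_{M^s,B^s}<2n$ for all $s<\hat t$ (hypothesis of Definition~\ref{Def: t_0}).
\begin{itemize}
\item A vertex $w$ with positive potential has $pot_{M^s,B^s}(w)\leq POT_{M^s,B^s}<2n$. By Definition~\ref{Balance-Potential Function}(ii) its Maker degree is therefore never $\lceil x\rceil-1$.
\item Maker degrees grow by at most one per turn, so such a vertex can never step from $\lceil x\rceil-1$ to $\lceil x\rceil$.
\item A vertex of potential $0$ has all its incident edges claimed, so its Maker degree is frozen.
\end{itemize}
Together these give $\Delta(M^s)<x$ throughout.

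With this in hand, Lemma~\ref{Lemma: pot change by M 1}(i) gives $pot_{M',B}(w)\leq \mu\, pot_{M,B}(w)$ for every $w$, where $M=M^{t_0-1}$, $B=B^{t_0-1}$. All of Breaker's subsequent claims in that turn (obligatory and high-potential edges) only decrease $d$, hence decrease potentials, and setting potentials to $0$ also decreases them. Therefore $pot_{M^{t_0},B^{t_0}}(w)\leq \mu\, pot_{M^{t_0-1},B^{t_0-1}}(w)$ for all $w$. Summing over $w$ yields
\[
POT_{M^{t_0},B^{t_0}}\leq \mu\, POT_{M^{t_0-1},B^{t_0-1}}\leq (1+\theta)n .
\]

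\emph{Step 2: from $t_0$ to $t$.} If $t=t_0$ we are done, since $\theta<1/3$ gives $(1+\theta)n<2n$. If $t_0<t<t_2$, then $t\notin T_2$. Taking $s:=t_0$ in (\ref{def: T2}), this gives $pot_{M^t,B^t}(w)<(1+\epsilon)\,pot_{M^{t_0},B^{t_0}}(w)$ for every $w\in V$. Summing over $w$,
\[
POT_{M^t,B^t}<(1+\epsilon)(1+\theta)n<\tfrac32\cdot\tfrac43\, n=2n,
\]
using $\epsilon<1/2$ and $\theta(n)\leq\theta(n_1)<1/3$, which holds because $\theta$ is decreasing and $n>n_1$.

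\emph{Main obstacle.} The only delicate point is Step 1, namely justifying $\Delta(M^{t_0-1})<x$ so that Lemma~\ref{Lemma: pot change by M 1}(i) may be invoked. This requires the induction on Maker degrees via Definition~\ref{Balance-Potential Function}(ii), with attention to vertices whose potential has dropped to zero. The rest is bookkeeping with the constants fixed at the start of the section.
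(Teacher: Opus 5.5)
Your proof is correct and is essentially the paper's argument. You use $t\notin T_2$ to compare each vertex potential at time $t$ with its value at $t_0$, bound the single turn $t_0-1$ by $POT_{M^{t_0},B^{t_0}}\le \mu\, POT_{M^{t_0-1},B^{t_0-1}}\le \mu n$, and finish with $(1+\epsilon)\mu<\tfrac32\cdot\tfrac43=2$. The differences are minor: you get the one-turn factor $\mu$ from Lemma~\ref{Lemma: pot change by M 1}(i) plus monotonicity under Breaker's edges, rather than from Proposition~\ref{Prop: critical turns}(i) with Lemma~\ref{Lemma: pot change by M 2}(i), and you explicitly verify $\Delta(M^{t_0-1})<x$ and treat $t=t_0$ separately, two points the paper leaves implicit.
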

\begin{proof}
Since $t < t_2$, we have $t \notin T_2$, so for all $v \in V$, by (\ref{def: T2}) it follows that
\[
pot_{M^t,B^t}(v) < (1+\epsilon)\,pot_{M^{t_0},B^{t_0}}(v).
\]
Hence
\begin{flalign*}
POT_{M^t,B^t}
&= \sum_{v \in V} pot_{M^t,B^t}(v)\\
&< \sum_{v \in V} (1+\epsilon)\,pot_{M^{t_0},B^{t_0}}(v)\\
&= (1+\epsilon)\,POT_{M^{t_0},B^{t_0}}\\
&\leq (1+\epsilon)\Big(
POT_{M^{t_0-1},B^{t_0-1}}
+ \Delta_{M^{t_0-1},B^{t_0-1}}^{+}
+ \Delta_{M^{t_0-1},B^{t_0-1}}^{h+}
\Big)
&\text{(Proposition \ref{Prop: critical turns}(i))}\\
&\leq (1+\epsilon)\,\mu^{\frac{bal_0\overline{f}(M^{t_0},B^{t_0-1})}{q}}
\,POT_{M^{t_0-1},B^{t_0-1}}
&\text{(Lemma \ref{Lemma: pot change by M 2}(i))}\\
&\leq (1+\epsilon)\,\mu\,POT_{M^{t_0-1},B^{t_0-1}}
& (bal_0<1,\ \overline{f}\leq q)\\
&\leq (1+\epsilon)\,\mu\,n
&\text{(Definition \ref{Def: t_0}(i))}\\
&\leq \tfrac{3}{2}\cdot \tfrac{4}{3}\,n = 2n
&\text{(Definition of $\mu,\epsilon$)}
\end{flalign*}
\end{proof}

The following Lemma is a refinement of the inequality in Lemma \ref{Lemma: pot change in critical turns}.
It shows that the potential of an unclaimed edge is bounded by a term exponentially dependent on the number of critical turns.

\begin{lemma}\label{Lemma: bounded pot}
Let $t \in [\bar{t}]$ with $t_0 \leq t < t_2$ and
$e \in E_n \setminus (M^{t} \cup B^{t})$.
Then we have
\[
pot_{M^t,B^t}(e)
< \bigg( \frac{(1+\epsilon)\,\mu\,bal_0}{1-\eta} \bigg)^{k(t)} \, 2 \, pot_{M^{t_0-1},B^{t_0-1}}(u).
\]
\end{lemma}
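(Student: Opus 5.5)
Fix an edge $e\in E_n\setminus(M^t\cup B^t)$ with $t_0\le t<t_2$. Since $e$ is still unclaimed at time $t$, it was unclaimed at every earlier time. The idea is to follow the potential of $e$ backwards to the last critical turn before $t$. There Lemma~\ref{Lemma: pot change in critical turns} gives a factor $\mu\,bal_0/(1-\eta)$ times the potential of Maker's edge in that turn. The factor $(1+\epsilon)$ comes from $t<t_2$: no vertex potential has grown by a factor $(1+\epsilon)$ since any turn $s\ge t_0$. Iterating over the $k(t)$ critical turns in $K(t)$ produces the power. The base case uses $e^{t_0-1}$ and the choice of $u$: $pot_{M^{t_0-1},B^{t_0-1}}(e^{t_0-1})\le 2\,pot_{M^{t_0-1},B^{t_0-1}}(u)$.

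I would argue by induction on $k(t)$. We always have $k(t)\ge 1$ for $t\ge t_0$, because $t_0-1\in K(t)$ is critical. Let $s:=\max K(t)$, the last critical turn with $t_0-1\le s<t$. Throughout I assume $\Delta(M^s)<x$, as Lemma~\ref{Lemma: pot change in critical turns} requires. I expect this to hold in the regime considered (potentials stay below $2n$ before $\hat t$) via Definition~\ref{Balance-Potential Function}(ii), or else as a standing hypothesis of this part of the proof.

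\textbf{Step 1.} Since $e$ is unclaimed at time $s+1\le t$, Lemma~\ref{Lemma: pot change in critical turns} gives $pot_{M^{s+1},B^{s+1}}(e)<\frac{\mu\,bal_0}{1-\eta}\,pot_{M^s,B^s}(e^s)$.

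\textbf{Step 2.} Pass from time $s+1$ to time $t$. Here $s+1\ge t_0$ and $t<t_2$, so $t\notin T_2$. Hence each endpoint $w$ of $e$ satisfies $pot_{M^t,B^t}(w)<(1+\epsilon)\,pot_{M^{s+1},B^{s+1}}(w)$, which gives $pot_{M^t,B^t}(e)\le(1+\epsilon)\,pot_{M^{s+1},B^{s+1}}(e)$. If $s+1=t$ this step is trivial.

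\textbf{Step 3.} If $s=t_0-1$, then $k(t)=1$. In this case $pot(e^{t_0-1})=pot(u)+pot(v)\le 2\,pot(u)$ at time $t_0-1$, and Steps 1--2 give the claim with exponent $1$.

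\textbf{Step 4.} If $s>t_0-1$, then $e^s$ is an edge unclaimed at time $s$ with $t_0\le s<t_2$, and $k(s)=k(t)-1$. The induction hypothesis bounds $pot_{M^s,B^s}(e^s)$ by $\big(\frac{(1+\epsilon)\mu\,bal_0}{1-\eta}\big)^{k(t)-1}\,2\,pot_{M^{t_0-1},B^{t_0-1}}(u)$. Combining this with Steps 1--2 multiplies by one more factor $\frac{(1+\epsilon)\mu\,bal_0}{1-\eta}$.

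The delicate point is strictness and the $\le$ versus $<$ bookkeeping. Lemma~\ref{Lemma: pot change in critical turns} is strict, which makes the final inequality strict even where Step 2 degenerates to equality. One must also handle a vertex of $e$ whose potential became $0$. That only lowers $pot(e)$, but Step 2 must be read as $\le$ in that case, so the argument is unaffected. The main obstacle I anticipate is ensuring that the hypothesis $\Delta(M^s)<x$ of Lemma~\ref{Lemma: pot change in critical turns} is available at every critical turn $s\in K(t)$. If it is not already implicit, I would first try to derive it from $POT<2n$ up to $\hat t$ together with Definition~\ref{Balance-Potential Function}(ii), which forbids a positive-potential vertex from reaching Maker degree $\lceil x\rceil-1$.
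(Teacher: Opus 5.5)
Your proposal is correct and follows the paper's proof essentially step for step: induction on $k(t)$, Lemma~\ref{Lemma: pot change in critical turns} applied at the last critical turn $s=\max K(t)$ (which is exactly the index the paper's inductive step needs), the factor $(1+\epsilon)$ coming from $t\notin T_2$, and the base case via $pot_{M^{t_0-1},B^{t_0-1}}(e^{t_0-1})\le 2\,pot_{M^{t_0-1},B^{t_0-1}}(u)$. The hypothesis $\Delta(M^s)<x$ that you flag is also used silently in the paper, and it can be discharged as you suggest: $POT<2n$ holds at every time before $t_2$ (by the choice of $\hat t$ and $t_0$ up to time $t_0-1$, and by Lemma~\ref{Lemma: t0<t<t2} from $t_0$ on), so by Definition~\ref{Balance-Potential Function}(ii) any vertex at Maker degree $\lceil x\rceil-1$ has potential $0$, i.e.\ all its edges are already claimed, and hence no Maker degree can exceed $\lceil x\rceil-1<x$.
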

\begin{proof}
We prove the statement by induction over $k(t)$.

\medskip
\noindent\textbf{$k(t)=1$:}
\begin{flalign*}
pot_{M^t,B^t}(e)
&< (1+\epsilon)\,pot_{M^{t_0},B^{t_0}}(e)
&& \llap{($t<t_2$, (\ref{def: T2}))}\\
&< (1+\epsilon)\,\frac{\mu\,bal_0}{1-\eta}\,
pot_{M^{t_0-1},B^{t_0-1}}(e^{t_0-1})
&& \llap{(Lemma \ref{Lemma: pot change in critical turns})}\\
&\leq \frac{(1+\epsilon)\,\mu\,bal_0}{1-\eta}\,
2\,pot_{M^{t_0-1},B^{t_0-1}}(u).
\end{flalign*}
\medskip
\noindent\textbf{$k(t)>1$:}
For all $s \in [\bar{t}]$ with $t_0 < s < t_2$, $k(s) < k(t)$ and all
$e' \in E_n \setminus (M^{s} \cup B^{s})$, the induction hypothesis yields
\[
pot_{M^s,B^s}(e')
< \bigg( \frac{(1+\epsilon)\,\mu\,bal_0}{1-\eta} \bigg)^{k(s)} \, 2 \, pot_{M^{t_0-1},B^{t_0-1}}(u).
\]
Let $s := \max\{ s' \in [t] : s' \text{ is critical} \} - 1$.
Then $k(s) = k(t) - 1$, and applying the induction hypothesis to $e' = e^s$ gives
\begin{flalign*}
pot_{M^t,B^t}(e)
&< (1+\epsilon)\,pot_{M^{s+1},B^{s+1}}(e)
&& \llap{($t<t_2$, (\ref{def: T2}))}\\
&< (1+\epsilon)\,\frac{\mu\,bal_0}{1-\eta}\,
pot_{M^{s},B^{s}}(e^s)
&& \llap{(Lemma \ref{Lemma: pot change in critical turns})}\\
&< (1+\epsilon)\,\frac{\mu\,bal_0}{1-\eta}
\bigg( \frac{(1+\epsilon)\,\mu\,bal_0}{1-\eta} \bigg)^{k(s)}
\, 2 \, pot_{M^{t_0-1},B^{t_0-1}}(u)\\
&= \bigg( \frac{(1+\epsilon)\,\mu\,bal_0}{1-\eta} \bigg)^{k(s)+1}
\, 2 \, pot_{M^{t_0-1},B^{t_0-1}}(u).
\end{flalign*}
\end{proof}

The following Lemma shows that between round $t_0-1$ and $t_*$ there are at most $c$ critical turns, where $c$ is the constant defined in (\ref{def: c}). 

\begin{lemma}\label{Lemma: k(t*)<c}
We have $k(t_*) \leq c$.
\end{lemma}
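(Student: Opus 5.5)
We argue by contradiction: assume $k(t_*) > c$, so there are more than $c$ critical turns $s$ with $t_0-1\le s<t_*$. Let $t$ be the turn directly after the $(c+1)$-st such critical turn (or after the $c$-th, depending on how the indices work out), so $t_0\le t\le t_*$ and $k(t)\ge c$. The aim is to show $t\in T_1$. Then $t_1\le t$ and therefore $t_*\le t$, which contradicts the choice of $t$ as lying strictly before the last counted critical turn. In short, we combine Lemma~\ref{Lemma: bounded pot}, which says edge potentials decay geometrically in the number of critical turns, with the definition (\ref{def: c}) of $c$. This should force the potential of $u$ below $(1-\gamma)pot_{M^{t_0-1},B^{t_0-1}}(u)$ before more than $c$ critical turns can happen.

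\emph{Key computation.} Put $\lambda := \frac{(1+\epsilon)\mu\, bal_0}{1-\eta}$. By (\ref{eq: bal0<...}), and since $\theta(n)\le\theta(n_1)$ and $(bal_n)_0=(bal_{n_1})_0$ by Definition~\ref{Balance-Potential Function}(i), we have $\lambda<1$. The denominator in (\ref{def: c}) equals $-\ln\lambda>0$, so
\[
\lambda^{c}\le e^{-(1-\ln(1-\gamma))}=\frac{1-\gamma}{e}.
\]
Now take $t$ with $t_0\le t<t_2$ and $k(t)\ge c$. If $u$ still has an unclaimed incident edge $e=\{u,w\}$ at time $t$, then Lemma~\ref{Lemma: bounded pot} and $pot(u)\le pot(e)$ give
\[
pot_{M^t,B^t}(u)\le pot_{M^t,B^t}(e)<\lambda^{k(t)}\,2\,pot_{M^{t_0-1},B^{t_0-1}}(u)\le \tfrac{2}{e}(1-\gamma)\,pot_{M^{t_0-1},B^{t_0-1}}(u).
\]
The factor $2/e<1$ absorbs the constant $2$. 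If $u$ has no unclaimed incident edge, then $pot_{M^t,B^t}(u)=0$. In either case $t\in T_1$, so $t_*\le t_1\le t$.

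\emph{Handling the case $t\ge t_2$.} If $t_2$ occurs before $c$ critical turns have elapsed, then $t_*=t_2$ and $k(t_*)\le c$ holds trivially. So we may assume all turns under consideration are $<t_2$, which is exactly what Lemma~\ref{Lemma: bounded pot} needs. We then choose $t$ as the first turn with $k(t)=c$. Since $k$ increases by at most one per turn, and by the above $t\in T_1$, we get $t_*\le t$ and hence $k(t_*)\le k(t)=c$. We also need $\Delta(M^s)<x$ for the intermediate turns so that Lemma~\ref{Lemma: pot change in critical turns} applies inside Lemma~\ref{Lemma: bounded pot}. I expect this to follow from Definition~\ref{Balance-Potential Function}(ii), because the total potential stays below $2n$ up to $t_2$ by Lemma~\ref{Lemma: t0<t<t2}.

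\emph{Main obstacle.} The delicate part is the bookkeeping of indices. One issue is the off-by-one between $k(t)$ counting the critical turn $t_0-1$ and the base case $k(t)=1$ in Lemma~\ref{Lemma: bounded pot}. The other is confirming that the hypotheses of that lemma hold up to time $t$, in particular that $t<t_2$ and that the degree condition $\Delta(M)<x$ is preserved. The numerical part, i.e.\ that $c$ is large enough that $2\lambda^{c}\le 1-\gamma$, is routine given the $1$ in the numerator of (\ref{def: c}).
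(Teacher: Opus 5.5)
Your proof is correct and is essentially the paper's argument: both combine Lemma~\ref{Lemma: bounded pot} with $pot_{M^t,B^t}(u)<pot_{M^t,B^t}(e)$ for an unclaimed edge $e\ni u$, and with the choice (\ref{def: c}) of $c$, which gives $2\lambda^{c}<1-\gamma$ for $\lambda=\frac{(1+\epsilon)\mu\,bal_0}{1-\eta}<1$. The paper runs the same chain in contrapositive form, taking $t=t_*-1<t_1$ and deducing $k(t)<c$, whereas you take the first $t$ with $k(t)=c$ and show $t\in T_1$, treating the case $pot(u)=0$ explicitly; the hypothesis $\Delta(M^s)<x$ that you flag is also used implicitly in the paper's proof of Lemma~\ref{Lemma: bounded pot}.
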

\begin{proof}
Let $t := t_* - 1$. Then $t < t_1$, and by definition of $T_1$ we have
\begin{flalign*}
0
&\leq (1-\gamma)\,pot_{M^{t_0-1},B^{t_0-1}}(u)\\
&< pot_{M^t,B^t}(u)
&& \llap{($t<t_1$, (\ref{def: T1}))}
\end{flalign*}
Therefore, by Definition \ref{def: balance}(iii), there exists
$e \in E_n \setminus (M^t \cup B^t)$ with $u \in e$, and we continue:
\begin{flalign*}
\bigg( \frac{(1+\epsilon)\,\mu\,bal_0}{1-\eta} \bigg)^{k(t)} \, 2 \, pot_{M^{t_0-1},B^{t_0-1}}(u)
&> pot_{M^t,B^t}(e)
&& \llap{(Lemma \ref{Lemma: bounded pot})}\\
&> pot_{M^t,B^t}(u)\\
&> (1-\gamma)\,pot_{M^{t_0-1},B^{t_0-1}}(u)
&& \llap{($t<t_1$, (\ref{def: T1}))}\\
&\geq \bigg( \frac{(1+\epsilon)\,\mu\,bal_0}{1-\eta} \bigg)^c \, 2 \, pot_{M^{t_0-1},B^{t_0-1}}(u)
&& \llap{(\ref{def: c})}
\end{flalign*}
Since $\frac{(1+\epsilon)\,\mu\,bal_0}{1-\eta} < 1$ by (\ref{eq: bal0<...}), it follows that
$k(t) < c$, hence $k(t_*) \leq k(t)+1 \leq c$.
\end{proof}

\begin{lemma}\label{Lemma: Bounded POT}
We have
\begin{enumerate}
\renewcommand{\labelenumi}{(\roman{enumi})}
\item
\[
\sum_{s \in K(t_*)} {\Delta}_{M^{s},B^{s}}^+
\leq 2\,c\,(\mu-1)\,pot_{M^{t_0-1},B^{t_0-1}}(u)
\]
\item
\[
POT_{M^{t_*},B^{t_*}} - POT_{M^{t_0-1},B^{t_0-1}}
\leq 2\,c\,(\mu-1)\,pot_{M^{t_0-1},B^{t_0-1}}(u) + R
\]
\end{enumerate}
\end{lemma}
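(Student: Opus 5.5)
For part (i), I will bound each summand separately and then use Lemma~\ref{Lemma: k(t*)<c}. For each critical turn $s\in K(t_*)$, Lemma~\ref{Lemma: pot change by M 1}(ii) gives
\[
\Delta^+_{M^s,B^s}\le(\mu-1)\,pot_{M^s,B^s}(e^s).
\]
The task is therefore to show $pot_{M^s,B^s}(e^s)\le 2\,pot_{M^{t_0-1},B^{t_0-1}}(u)$ for every such $s$.

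For $s=t_0-1$ this is immediate, since $u$ was chosen as the endpoint of larger potential. For $s\ge t_0$ we have $s<t_*\le t_2$, and $e^s$ is unclaimed at time $s$. Lemma~\ref{Lemma: bounded pot} then yields
\[
pot_{M^s,B^s}(e^s)<\Big(\tfrac{(1+\epsilon)\mu bal_0}{1-\eta}\Big)^{k(s)}\,2\,pot_{M^{t_0-1},B^{t_0-1}}(u)\le 2\,pot_{M^{t_0-1},B^{t_0-1}}(u),
\]
because the base is $<1$ by (\ref{eq: bal0<...}). Summing over the $k(t_*)\le c$ critical turns gives (i).

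For part (ii), I telescope:
\[
POT_{M^{t_*},B^{t_*}}-POT_{M^{t_0-1},B^{t_0-1}}=\sum_{s=t_0-1}^{t_*-1}\big(POT_{M^{s+1},B^{s+1}}-POT_{M^s,B^s}\big).
\]
By Proposition~\ref{Prop: critical turns}(i),(ii) and the definitions of $\Delta$ and $r$, each increment equals $\Delta_{M^s,B^s}+r_{M^s,B^s}$. Summing the $r$ terms gives exactly $R$.

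For a non-critical $s$ we have $\Delta_{M^s,B^s}\le0$, so these terms can be dropped. For a critical $s$ we have $\Delta_{M^s,B^s}\le \Delta^+_{M^s,B^s}+\Delta^{h+}_{M^s,B^s}$, since $\Delta^{hp}\le0$. Moreover $\Delta^{h+}(w)\le0$ in both cases of its definition: it is either $\Delta^{heads}(w)\le0$, or it equals $-\Delta^+(w)$. In the second case, however, $\Delta^+(w)+\Delta^{h+}(w)=0\le\Delta^+(w)$, since Maker's move only raises potentials. So in all cases $\Delta^+(w)+\Delta^{h+}(w)\le\Delta^+(w)$ pointwise, hence $\Delta_{M^s,B^s}\le\Delta^+_{M^s,B^s}$. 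Summing over $K(t_*)$ and applying (i) yields (ii).

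The main obstacle is the bound on $pot_{M^s,B^s}(e^s)$ in (i). Lemma~\ref{Lemma: bounded pot} requires $t_0\le s<t_2$ and the edge $e^s$ to be unclaimed at time $s$. This holds because Maker claims $e^s$ only in turn $s$, but the boundary case $s=t_0-1$ must be handled separately. I also need to check that $\Delta(M^s)<x$ holds where the earlier lemmas are invoked; I assume this is guaranteed by the standing hypotheses of the surrounding argument.
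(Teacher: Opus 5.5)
Your proposal is correct and follows the paper's proof essentially step by step. You bound each $\Delta^+_{M^s,B^s}$ via Lemma~\ref{Lemma: pot change by M 1}(ii), handle $s=t_0-1$ through the choice of $u$ and $s\ge t_0$ through Lemma~\ref{Lemma: bounded pot} together with (\ref{eq: bal0<...}), count at most $c$ terms, and for (ii) use the same telescoping into $\Delta_{M^s,B^s}+r_{M^s,B^s}$, dropping the non-critical turns. Your pointwise argument that $\Delta_{M^s,B^s}\le\Delta^+_{M^s,B^s}$ fills in a step the paper asserts without comment. It uses $\Delta^{hp}\le 0$, $\Delta^{heads}\le 0$ and $\Delta^+(w)\ge 0$, and the last of these follows from $b^*$ being increasing, which in turn follows from the monotonicity axioms of $bal$.
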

\begin{proof}
\begin{enumerate}
\renewcommand{\labelenumi}{(\roman{enumi})}
\item
\begin{flalign*}
{\Delta}_{M^{t_0-1},B^{t_0-1}}^+
&\leq (\mu-1)\,pot_{M^{t_0-1},B^{t_0-1}}(e^{t_0-1})
&& \llap{(Lemma \ref{Lemma: pot change by M 1}(ii))}\\
&\leq (\mu-1)\,2\,pot_{M^{t_0-1},B^{t_0-1}}(u)
&& \llap{(Definition of $u$)}
\end{flalign*}

For all $s \in K(t_*)$ with $t_0 \leq s$ we have
\begin{flalign*}
{\Delta}_{M^{s},B^{s}}^+
&\leq (\mu-1)\,pot_{M^{s},B^{s}}(e^{s})
&& \llap{(Lemma \ref{Lemma: pot change by M 1}(ii))}\\
&< (\mu-1)\bigg( \frac{(1+\epsilon)\,\mu\,bal_0}{1-\eta} \bigg)^{k(s)}
\,2\,pot_{M^{t_0-1},B^{t_0-1}}(u)
&& \llap{(Lemma \ref{Lemma: bounded pot})}\\
&\leq (\mu-1)\,2\,pot_{M^{t_0-1},B^{t_0-1}}(u)
&& \llap{(\ref{eq: bal0<...})}
\end{flalign*}
Lemma \ref{Lemma: k(t*)<c} implies (i).
\item
\begin{flalign*}
POT_{M^{t_*},B^{t_*}} - POT&_{M^{t_0-1},B^{t_0-1}}
= \sum_{s=t_0-1}^{t_*-1}
\big(POT_{M^{s+1},B^{s+1}} - POT_{M^{s},B^{s}}\big)\\
&= \sum_{s=t_0-1}^{t_*-1}
\big({\Delta}_{M^{s},B^{s}} + r_{M^{s},B^{s}}\big)
&& \llap{(Proposition \ref{Prop: critical turns}(i))}\\
&= R
+ \sum_{s \in K(t_*)} {\Delta}_{M^{s},B^{s}}
+ \sum_{\substack{s \notin K(t_*)\\ t_0-1 \leq s < t_*}} {\Delta}_{M^{s},B^{s}}
&& \llap{(Definition \ref{Def: t_0}(v))}\\
&\leq R + \sum_{s \in K(t_*)} {\Delta}_{M^{s},B^{s}}
&& \llap{(Definition \ref{Def: critical turn})}\\
&\leq R + \sum_{s \in K(t_*)} {\Delta}_{M^{s},B^{s}}^+\\
&\leq 2\,c\,(\mu-1)\,pot_{M^{t_0-1},B^{t_0-1}}(u) + R
&& \llap{(by (i))}
\end{flalign*}
\end{enumerate}
\end{proof}

In the following two Lemmata it will be shown that if the game continues till turn $t_*$, then the overall potential at turn $t_*$ is at most at the potential of turn $t_0-1$. This means the game will be over before turn $t_*$, because otherwise we have a contradiction to the definition of $t_0$.

\begin{lemma}\label{Lemma: t1=t*<t_bar}
Let $t_1=t_*\leq \overline{t}$. Then it holds 
\[
POT_{M^{t_*},B^{t_*}} \leq POT_{M^{t_0-1},B^{t_0-1}}\,.
\]
\end{lemma}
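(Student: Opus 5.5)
By Lemma~\ref{Lemma: Bounded POT}(ii) it suffices to show
\[
R \leq -2c(\mu-1)\,pot_{M^{t_0-1},B^{t_0-1}}(u).
\]
Since every summand $r_{M^s,B^s}$ is non-positive by Proposition~\ref{Prop: critical turns}(iii), I will not need all of $R$. It is enough to find a single part of $R$ that is negative enough. The natural candidate is the loss of potential at the vertex $u$. Because $t_*=t_1$, we know
\[
pot_{M^{t_*},B^{t_*}}(u)\leq (1-\gamma)\,pot_{M^{t_0-1},B^{t_0-1}}(u),
\]
so over the turns $t_0-1,\dots,t_*-1$ the potential of $u$ dropped by at least $\gamma\, pot_{M^{t_0-1},B^{t_0-1}}(u)$.

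I will split the change of $pot(u)$ turn by turn according to Proposition~\ref{Prop: critical turns}(i) and (ii). Each term is either a positive contribution, namely $\Delta^+(u)+\Delta^{h+}(u)$ in turns where Maker's edge contains $u$, or a negative contribution, namely $\Delta^{h-}(u)$, $\Delta^{tails}(u)$, $\Delta^{hp}(u)$ or $\Delta^0(u)$. Every negative contribution except $\Delta^{hp}(u)$ appears in $R$ with weight $1$, and $\Delta^{hp}(u)$ appears with weight $\eta$. So the $u$-part of $R$ is at most $\eta$ times the total negative change of $pot(u)$. This total negative change is at least the net drop $\gamma\,pot_{M^{t_0-1},B^{t_0-1}}(u)$ plus all positive increments of $pot(u)$.

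The next step bounds those positive increments, and this is where the main obstacle lies. First, the turn $t_0-1$ does not count against us, because the drop is measured from $pot_{M^{t_0-1},B^{t_0-1}}(u)$ and any increase at that turn only makes the total negative change larger. For turns $s\geq t_0$, a positive increment of $pot(u)$ means the potential sum rises, so by Proposition~\ref{Prop: critical turns}(iv) I expect to charge each increment to a critical turn. There are at most $c$ critical turns by Lemma~\ref{Lemma: k(t*)<c}. By Lemma~\ref{Lemma: pot change by M 1}(i) and $s<t_2$, each increment is at most about $(\mu-1)(1+\epsilon)\,pot(u)$, which is $O(\theta)\,pot_{M^{t_0-1},B^{t_0-1}}(u)$. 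I will treat these increments as harmless, since including them only strengthens the estimate. The delicate point is bookkeeping the weight $\eta$ on $\Delta^{hp}$ against the weight $1$ on the other negative terms, and making sure the terms of $\Delta^0(u)$ are handled when $u$ becomes saturated.

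Putting these together gives
\[
R\leq -\eta\gamma\, pot_{M^{t_0-1},B^{t_0-1}}(u).
\]
By \eqref{eq: n big} we have $2c\theta<\eta\gamma$, that is, $2c(\mu-1)<\eta\gamma$. Hence
\[
2c(\mu-1)\,pot_{M^{t_0-1},B^{t_0-1}}(u)+R\leq 0,
\]
and Lemma~\ref{Lemma: Bounded POT}(ii) yields $POT_{M^{t_*},B^{t_*}}\leq POT_{M^{t_0-1},B^{t_0-1}}$. If the bookkeeping at the delicate step only gives a slightly weaker bound, the slack in \eqref{eq: n big} (a strict inequality) should still absorb the loss.
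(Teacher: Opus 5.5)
Your argument is correct and is essentially the paper's proof. It combines Lemma~\ref{Lemma: Bounded POT}(ii) with $2c(\mu-1)=2c\theta<\eta\gamma$ from \eqref{eq: n big}, writes the drop $pot_{M^{t_0-1},B^{t_0-1}}(u)-pot_{M^{t_*},B^{t_*}}(u)\geq\gamma\,pot_{M^{t_0-1},B^{t_0-1}}(u)$ as the sum of per-turn pieces at $u$, and discards the nonnegative $\Delta_{M^s,B^s}^{+}(u)+\Delta_{M^s,B^s}^{h+}(u)$. It then uses $\eta\leq 1$ to bound the rest by the $u$-part of $r_{M^s,B^s}$, and that by $R$ because all other per-vertex pieces are non-positive. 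The paper writes this sum over $K(t_*)$ only, but summing over all turns $t_0-1\leq s\leq t_*-1$, as you do, is what makes the telescoping exact.

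Your middle paragraph about charging positive increments of $pot(u)$ to critical turns is unnecessary. Its justification via Proposition~\ref{Prop: critical turns}(iv) is also not valid, since a rise of $pot(u)$ need not make $POT$ rise. Because you discard those increments anyway, this does not affect the proof, and no slack from \eqref{eq: n big} beyond $2c\theta<\eta\gamma$ is needed.
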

\begin{proof}
It holds
\begin{flalign*}
&& PO&T_{M^{t_*},B^{t_*}} - POT_{M^{t_0-1},B^{t_0-1}} &&\\
&& &\leq 2\,c\,(\mu-1)\,pot_{M^{t_0-1},B^{t_0-1}}(u) + R
&& \llap{(Lemma \ref{Lemma: Bounded POT}(ii))} \\
&& &\leq \eta\,\gamma\,pot_{M^{t_0-1},B^{t_0-1}}(u) + R
&& \llap{(\text{by }(\ref{eq: n big}))} \\
&& &\leq \eta\, \big( pot_{M^{t_0-1},B^{t_0-1}}(u) - pot_{M^{t_*},B^{t_*}}(u) \big) + R
&& \llap{($t_* = t_1$, (\ref{def: T1}))} \\
&& &= -\eta 
\sum_{s \in K(t_*)} \bigg(
{\Delta}_{M^{s},B^{s}}^+(u) + {\Delta}_{M^{s},B^{s}}^{h+}(u)
&&\\
&& &\qquad + {\Delta}_{M^{s},B^{s}}^{h-}(u) + {\Delta}_{M^{s},B^{s}}^{tails}(u)
&&\\
&& &\qquad + {\Delta}_{M^{s},B^{s}}^{free}(u) + {\Delta}_{M^{s},B^{s}}^{0}(u)
\bigg) + R
&& \llap{(Prop. \ref{Prop: critical turns}(i))} \\
&& &\leq -\eta 
\sum_{s \in K(t_*)} \bigg(
{\Delta}_{M^{s},B^{s}}^{h-}(u) + {\Delta}_{M^{s},B^{s}}^{tails}(u)
&&\\
&& &\qquad + {\Delta}_{M^{s},B^{s}}^{free}(u) + {\Delta}_{M^{s},B^{s}}^{0}(u)
\bigg) + R
&& \llap{$(\Delta^+ + h^+ \geq 0)$} \\
&& &\leq -
\sum_{s \in K(t_*)} \bigg(
{\Delta}_{M^{s},B^{s}}^{h-}(u) + {\Delta}_{M^{s},B^{s}}^{tails}(u)
&&\\
&& &\qquad + \eta\,{\Delta}_{M^{s},B^{s}}^{free}(u) + {\Delta}_{M^{s},B^{s}}^{0}(u)
\bigg) + R
&& \llap{$(\gamma \leq 1)$} \\
&& &\leq -
\sum_{s \in K(t_*)} \big(
r_{M^{s},B^{s}}
\big) + R
&& \llap{(\text{by }(\ref{def: r}))} \\
&& &\leq 0 \,.
&& \llap{(Definition of $R$)}
\end{flalign*}
\end{proof}

\begin{lemma}\label{Lemma: t2=t*<t_bar}
Let $t_1 \neq t_2=t_*\leq \overline{t}$. Than it holds
\[
POT_{M^{t_*},B^{t_*}} \leq POT_{M^{t_0-1},B^{t_0-1}}\,.
\]
\end{lemma}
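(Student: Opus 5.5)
We mirror the proof of Lemma~\ref{Lemma: t1=t*<t_bar}, but now the excess term $2c(\mu-1)pot_{M^{t_0-1},B^{t_0-1}}(u)$ from Lemma~\ref{Lemma: Bounded POT}(ii) has to be absorbed by the decrease of the potential of the vertex $w$ that witnesses $t_*\in T_2$, rather than by the decrease at $u$. Since $t_* = t_2 < t_1$, we know $t_*-1\notin T_1$ and $t_*\notin T_1$. Hence $pot_{M^{t_*},B^{t_*}}(u) > (1-\gamma)\,pot_{M^{t_0-1},B^{t_0-1}}(u)$, and this lower bound is what we use to convert bounds in $u$ into bounds in $w$.

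\textbf{Step 1: isolate the witness.} Because $t_*\in T_2$, there are $w\in V$ and $s$ with $t_0\le s<t_*$ such that $pot_{M^{t_*},B^{t_*}}(w)\ge(1+\epsilon)\,pot_{M^s,B^s}(w)$. Every change of $pot(w)$ is one of the terms $\Delta^+,\Delta^{h+},\Delta^{h-},\Delta^{tails},\Delta^{hp},\Delta^0$. Only $\Delta^+(w)+\Delta^{h+}(w)$ is positive, and it is nonzero only in turns where Maker's edge contains $w$. So between $s$ and $t_*$ the vertex $w$ gains at least $\epsilon\,pot_{M^s,B^s}(w)$ through Maker moves at $w$. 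Lemma~\ref{Lemma: pot change by M 2}(i) bounds each such gain by $(\mu-1)$ times the current potential.

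\textbf{Step 2: compare with the critical contributions.} The aim is the bound
\[
2c(\mu-1)\,pot_{M^{t_0-1},B^{t_0-1}}(u)\le \eta\sum_{s'}\big|\text{Breaker-induced decrease at the relevant vertices}\big|,
\]
which can then be charged against $R$, as at the end of the proof of Lemma~\ref{Lemma: t1=t*<t_bar}. By Step 1 and $pot\le(1+\epsilon)$ times earlier values (valid before $t_2$), each Maker increment at $w$ is at most $(\mu-1)(1+\epsilon)pot(w)$. Achieving a total growth factor $1+\epsilon$ therefore needs at least $\ln(1+\epsilon)/\ln\mu\approx \epsilon/\theta$ Maker moves at $w$. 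Condition (iii)(b) of Definition~\ref{Balance-Potential Function} lets each such move raise the deficit by at most about $bal_0 q$, while the potential changes only through the deficit.

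This is where the quantity $1-(1+\epsilon)^{-1/bal_0}$ in (\ref{eq: n big}) enters. The deficit of $w$ must rise by at least $q\log_\mu(1+\epsilon)$. Each Maker move at $w$ raises $b^*$, but in the same turns Breaker claims edges, lowering potentials. Using the high-potential choice as in Lemma~\ref{Lemma: pot change in critical turns}, Breaker's $hp$-edges remove at least a $(1-(1+\epsilon)^{-1/bal_0})$-fraction of the potential reached. Combined with $pot(w)\ge (1-\gamma)pot_{M^{t_0-1},B^{t_0-1}}(u)$ (obtained via the max-potential rule, since edges at $w$ compete with edges at $u$), this gives
\[
-\eta\sum_{s'\in K(t_*)}\Delta^{hp}_{M^{s'},B^{s'}}\ \ge\ \eta(1-\gamma)\big(1-(1+\epsilon)^{-1/bal_0}\big)\,pot_{M^{t_0-1},B^{t_0-1}}(u)\ >\ 2c(\mu-1)\,pot_{M^{t_0-1},B^{t_0-1}}(u)
\]
by (\ref{eq: n big}), using $\mu-1=\theta$.

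\textbf{Step 3: conclude.} Insert this into Lemma~\ref{Lemma: Bounded POT}(ii). The $\eta\Delta^{hp}$ parts are contained in $R$ and are nonpositive, and the remaining parts of $R$ are nonpositive by Proposition~\ref{Prop: critical turns}(iii). Together these give $POT_{M^{t_*},B^{t_*}}-POT_{M^{t_0-1},B^{t_0-1}}\le0$.

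The main obstacle is Step 2: turning the multiplicative growth of $pot(w)$ into a quantitative lower bound on Breaker's hp-decrease expressed in $pot_{M^{t_0-1},B^{t_0-1}}(u)$. If charging directly to $w$ fails, the first thing to try is charging to the edge $\{u,w\}$ or to the high-potential edges selected in those turns. These edges have potential at least that of any free edge at $w$, so Lemma~\ref{Lemma: pot change by M 1}(iii) yields the needed $(1-\mu^{-1/q})$ per-edge loss.
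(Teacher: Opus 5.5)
Your outline matches the paper's: use the witness $w$, force Breaker high-potential losses measured in $pot_{M^{t_0-1},B^{t_0-1}}(u)$, and absorb $2c(\mu-1)pot_{M^{t_0-1},B^{t_0-1}}(u)$ into $R$ via (\ref{eq: n big}). However, Step 2, which carries the whole lemma, is asserted rather than proved, and the route you sketch fails.

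\textbf{Counting Maker moves instead of hp-edges.} You count Maker moves at $w$ and bound each gain crudely by $(\mu-1)$, or by a deficit increase of ``about $bal_0 q$''. That throws away the information that makes the argument work. Lemma~\ref{Lemma: pot change by M 2}(i) bounds the growth of $pot(w)$ in the turn leading to $(M^s,B^s)$ by the factor $\mu^{bal_0\overline{f}(M^s,B^{s-1})/q}$. Here $\overline{f}(M^s,B^{s-1})$ is exactly the number of high-potential edges Breaker claims in that turn. Multiply over the turns between $s_0$ and $t_*$ and use $pot_{M^{t_*},B^{t_*}}(w)\ge(1+\epsilon)pot_{M^{s_0},B^{s_0}}(w)$. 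This gives $\sum_{s=s_0}^{t_*}\overline{f}(M^s,B^{s-1})\ge\alpha:=q\ln(1+\epsilon)/(bal_0\ln\mu)$. So the growth forces many hp-\emph{edges}, not merely many Maker moves. Counting turns only gives roughly $\ln(1+\epsilon)/(bal_0\ln\mu)$ turns, and these may be filled mostly with obligatory edges. Hence your count yields no lower bound on $-\Delta^{hp}$.

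\textbf{Wrong lower bound per edge.} The inequality $pot(w)\ge(1-\gamma)pot_{M^{t_0-1},B^{t_0-1}}(u)$ has no justification. The maximum-potential rule lower-bounds the edges Breaker picks, not $pot(w)$, and $w$ may have tiny potential. Your fallback, comparing hp-edges with free edges at $w$, likewise only gives bounds in terms of $pot(w)$, which cannot absorb a term in $pot_{M^{t_0-1},B^{t_0-1}}(u)$. The comparison must go through $u$:
\begin{itemize}
\item Since $t_*<t_1$, we have $pot_{M^{t_*},B^{t_*}}(u)>0$, so some edge $e\ni u$ is still free at $t_*$, and hence at every earlier time.
\item Therefore each $hp(M^s,B^{s-1})^i$ with $t_0\le s\le t_*$ has potential at least
\[
pot_{M^s,X(M^s,B^{s-1})^i}(e)\ge pot_{M^s,B^s}(e)>pot_{M^s,B^s}(u)>(1-\gamma)pot_{M^{t_0-1},B^{t_0-1}}(u).
\]
\item Lemma~\ref{Lemma: pot change by M 1}(iii) then gives a drop in $POT$ of at least $(1-\mu^{-1/q})(1-\gamma)pot_{M^{t_0-1},B^{t_0-1}}(u)$ per such edge.
\end{itemize}
Multiplying by $\alpha$ and applying (\ref{eq: alpha}) gives $\alpha(1-\mu^{-1/q})\ge 1-\mu^{-\alpha/q}=1-(1+\epsilon)^{-1/bal_0}$. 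That is where this factor comes from, not from a ``fraction of the potential reached''.

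\textbf{Summation range.} The hp-sum must run over all turns $s_0-1,\dots,t_*-1$, not just over $K(t_*)$, because $pot(w)$ can grow in non-critical turns. This is harmless, since $R$ contains every turn from $t_0-1$ onward.
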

\begin{proof}
Since $t_*=t_2$, by (\ref{def: T2}), there exists $w \in V$ and $s_0 \in [t_*-1]$ with $t_0 \leq s_0$ and
\begin{equation}\label{eq: pot_s<pot_t*}
(1+\epsilon)\,pot_{M^{s_0},B^{s_0}}(w)
\leq pot_{M^{t_*},B^{t_*}}(w)\,.
\end{equation}
First, we show via induction over $t$ that for all $t\in[t_*]$ with $s_0 \leq t$ the following inequality is true:
\begin{equation}\label{eq: pot_t<pot_s}
pot_{M^{t},B^{t}}(w)
\leq
pot_{M^{s_0},B^{s_0}}(w)
\prod\limits_{s=s_0}^{t}
\mu^{\frac{bal_0\overline{f}(M^s,B^{s-1})}{q}} \,.
\end{equation}
For $t=s_0$ the statement is obviously true. So let $t\in[t_*]$ with $s_0<t$ and let the statement be true for $t-1$. We have by induction hypothesis
\begin{align*}
pot_{M^{t},B^{t}}(w)
&= pot_{M^{t},B^{t}}(w)
- pot_{M^{t-1},B^{t-1}}(w)
+ pot_{M^{t-1},B^{t-1}}(w)\\
&\leq
{\Delta}_{M^{t-1},B^{t-1}}^{+}(w)
+ {\Delta}_{M^{t-1},B^{t-1}}^{h+}(w)\\
&\qquad
+ pot_{M^{t-1},B^{t-1}}(w)
&& \llap{(\text{by }(\ref{def: Delta^h+ (w)})-(\ref{def: Delta^h-}))} \\
&\leq
\bigg(
\mu^{\frac{bal_0\overline{f}(M^t,B^{t-1})}{q}} -1
\bigg)
pot_{M^{t-1},B^{t-1}}(w)\\
&\qquad
+ pot_{M^{t-1},B^{t-1}}(w)
&& \llap{(Lemma \ref{Lemma: pot change by M 2})} \\
&=
pot_{M^{t-1},B^{t-1}}(w)\,
\mu^{\frac{bal_0\overline{f}(M^t,B^{t-1})}{q}}\\
&\leq
\bigg(
pot_{M^{s_0},B^{s_0}}(w)
\prod\limits_{s=s_0}^{t-1}
\mu^{\frac{bal_0\overline{f}(M^s,B^{s-1})}{q}}
\bigg)
\mu^{\frac{bal_0\overline{f}(M^t,B^{t-1})}{q}}\\
&=
pot_{M^{s_0},B^{s_0}}(w)
\prod\limits_{s=s_0}^{t}
\mu^{\frac{bal_0\overline{f}(M^s,B^{s-1})}{q}} \,.
\end{align*}
(\ref{eq: pot_s<pot_t*}) and (\ref{eq: pot_t<pot_s}) give
\begin{align*}
(1+\epsilon)\,pot_{M^{s_0},B^{s_0}}(w)
&\leq pot_{M^{t_*},B^{t_*}}(w)
&& \llap{(\text{by }(\ref{eq: pot_s<pot_t*}))} \\
&\leq
pot_{M^{s_0},B^{s_0}}(w)
\prod\limits_{s=s_0}^{t_*}
\mu^{\frac{bal_0\overline{f}(M^s,B^{s-1})}{q}}
&& \llap{(\text{by }(\ref{eq: pot_t<pot_s}))}
\end{align*}
therefore
\begin{align*}
(1+\epsilon)
&\leq
\prod\limits_{s=s_0}^{t_*}
\mu^{\frac{bal_0\overline{f}(M^s,B^{s-1})}{q}}\\
&=
\mu^{\frac{
bal_0
\sum\limits_{s=s_0}^{t_*}
\overline{f}(M^s,B^{s-1})
}{q}}\,.
\end{align*}
Hence
\begin{equation}\label{def: alpha}
\alpha
:=
\frac{q\,\ln(1+\epsilon)}{bal_0\,\ln(\mu)}
\leq
\sum\limits_{s=s_0}^{t_*}
\overline{f}(M^s,B^{s-1})\,.
\end{equation}
Since $t_*<t_1$ we have $pot_{M^{t_*},B^{t_*}}(u)>0$, which means there is still an unclaimed edge $e$ with $u \in e$. By (\ref{eq: alpha<f_bar}) and $t_*<t_1$, for all $s\in[t_*]$ with $t_0 \leq s$ and $i \in [\overline{f}(M^{s},B^{s-1})]$ we have
\begin{align*}
(1-\gamma)\,pot_{M^{t_0-1},B^{t_0-1}}(u)
&< pot_{M^{s},B^{s}}(u)\\
&< pot_{M^{s},B^{s}}(e)\\
&\leq pot_{M^{s},X(M^{s},B^{s-1})^i}(e)\\
&\leq
pot_{M^{s},X(M^{s},B^{s-1})^i}
\big(hp(M^{s},B^{s-1})^i\big)\,.
\end{align*}
For all $s\in[t_*]$ with $t_0 \leq s$ and $i \in [\overline{f}(M^{s},B^{s-1})]$ it follows
\begin{flalign*}
&& PO&T_{M^{s},X(M^{s},B^{s-1})^{i+1}}
- POT_{M^{s},X(M^{s},B^{s-1})^i}
&&\\
&& &\leq
\sum_{v\in V}
\mu^{\frac{d_{M^{s},X(M^{s},B^{s-1})^{i+1}}(v)}{q}} -
\mu^{\frac{d_{M^{s},X(M^{s},B^{s-1})^i}(v)}{q}}
&&\\
&& &\leq
\big(\mu^{-1/q}-1\big)
pot_{M^{s},X(M^{s},B^{s-1})^i}
\big(hp(M^{s},B^{s-1})^i\big)
&& \llap{(Lemma \ref{Lemma: pot change by M 1}(iii))} \\
&& &\leq
\big(\mu^{-1/q}-1\big)
(1-\gamma)\,
pot_{M^{t_0-1},B^{t_0-1}}(u)\,.
&&
\end{flalign*}
So for all $s\in[t_*]$ with $t_0 \leq s$ we have
\begin{flalign*}
&&
\Delta_{M^{s-1},B^{s-1}}^{hp} &=
\sum\limits_{i=1}^{\overline{f}(M^s,B^{s-1})}
\Big(
POT_{M^{s},X(M^{s},B^{s-1})^{i+1}}
-
POT_{M^{s},X(M^{s},B^{s-1})^i}
\Big)
&&\\
&& &\leq
\sum\limits_{i=1}^{\overline{f}(M^s,B^{s-1})}
\big(\mu^{-1/q}-1\big)
(1-\gamma)\,
pot_{M^{t_0-1},B^{t_0-1}}(u)
&&\\
&& &\leq
\overline{f}(M^s,B^{s-1})
\big(\mu^{-1/q}-1\big)
(1-\gamma)\,
pot_{M^{t_0-1},B^{t_0-1}}(u)\,.
&&
\end{flalign*}
Therefore by (\ref{def: alpha})
\begin{equation}\label{eq: pot(u)>...}
\alpha\,(1-\gamma)\big(\mu^{-1/q}-1\big)
pot_{M^{t_0-1},B^{t_0-1}}(u)
\geq
\sum_{s=s_0}^{t_*}
{\Delta}_{M^{s-1},B^{s-1}}^{hp}\,.
\end{equation}
It follows
\begin{flalign*}
&& PO&T_{M^{t_*},B^{t_*}}
- POT_{M^{t_0-1},B^{t_0-1}}
&&\\
&& &\leq
2\,c\,(\mu-1)\,
pot_{M^{t_0-1},B^{t_0-1}}(u) + R
&& \llap{(Lemma \ref{Lemma: Bounded POT}(ii))} \\
&& &\leq
\eta(1-\gamma)
\big(1-(1+\epsilon)^{-1/bal_0}\big)
pot_{M^{t_0-1},B^{t_0-1}}(u) + R
&& \llap{(\text{by }(\ref{eq: n big}))} \\
&& &\leq
\eta(1-\gamma)
\big(1-\mu^{-\alpha/q}\big)
pot_{M^{t_0-1},B^{t_0-1}}(u) + R
&& \llap{(\text{by }(\ref{def: alpha}))} \\
&& &\leq
\alpha\,\eta(1-\gamma)
\big(1-\mu^{-1/q}\big)
pot_{M^{t_0-1},B^{t_0-1}}(u) + R
&& \llap{(\text{by }(\ref{eq: alpha}))} \\
&& &=
-\eta\,\alpha(1-\gamma)
\big(\mu^{-1/q}-1\big)
pot_{M^{t_0-1},B^{t_0-1}}(u) + R
&&\\
&& &\leq
-\eta
\sum_{s=s_0-1}^{t_*-1}
{\Delta}_{M^{s},B^{s}}^{hp}
+ R
&& \llap{(\text{by }(\ref{eq: pot(u)>...}))} \\
&& &\leq
-
\sum_{s=s_0-1}^{t_*-1}
r_{M^{s},B^{s}}
+ R
&& \llap{(\text{by }(\ref{def: r}))} \\
&& &\leq 0\,.
&& \llap{(Definition of $R$)}
\end{flalign*}
\end{proof}

We can now prove that the overall potential at turn $\hat{t}$ is strictly smaller than $2n$.

\begin{lemma}\label{Lemma: POT<2n}
\[
POT_{M^{\hat{t}},B^{\hat{t}}} < 2n \,.
\]
\end{lemma}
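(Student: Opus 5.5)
The plan is a case distinction on the position of $\hat t$ relative to $t_0$, $t_1$ and $t_2$, using the setup of Definition~\ref{Def: t_0}. Here $\hat t$ is a turn with $POT_{M^t,B^t}<2n$ for all $t<\hat t$, and $t_0-1$ is the last turn up to $\hat t$ with $POT\le n$.

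First, if $POT_{M^{\hat t},B^{\hat t}}\le n$, there is nothing to prove. Otherwise $t_0-1<\hat t$, so $t_0\le \hat t$, and we split according to whether $\hat t<t_*$ or $\hat t\ge t_*$.

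\emph{Case $\hat t<t_*$.} Then in particular $\hat t<t_2$, so Lemma~\ref{Lemma: t0<t<t2}, applied with $t=\hat t$ (which satisfies $t_0\le \hat t<t_2$), gives $POT_{M^{\hat t},B^{\hat t}}<2n$ directly.

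\emph{Case $t_*\le \hat t$.} In particular $t_*\le\overline t$ is finite, and either $t_*=t_1$ or $t_1\neq t_2=t_*$. Lemma~\ref{Lemma: t1=t*<t_bar} or Lemma~\ref{Lemma: t2=t*<t_bar}, respectively, yields
\[
POT_{M^{t_*},B^{t_*}}\le POT_{M^{t_0-1},B^{t_0-1}}\le n .
\]
Since $t_0\le t_*\le\hat t$, the turn $t_*$ lies in $\{t\in[\hat t]: POT_{M^t,B^t}\le n\}$ and exceeds $t_0-1$. This contradicts the maximality in the definition of $t_0$, so this case cannot occur (or, put differently, it forces $POT_{M^{\hat t},B^{\hat t}}\le n$).

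The main point to check is that the hypotheses of the earlier lemmas actually hold, in particular $\Delta(M^t)<x$, which Lemma~\ref{Lemma: pot change in critical turns} and Lemma~\ref{Lemma: bounded pot} use. I would obtain this from Definition~\ref{Balance-Potential Function}(ii). As long as $POT<2n$, every vertex $v$ with positive potential satisfies $0<pot(v)\le 2n$, so $\deg_M(v)\neq\lceil x\rceil-1$. Since Maker's degrees grow by at most one per turn, no degree can jump past $\lceil x\rceil-1$. Vertices of zero potential have all incident edges claimed, so their degrees no longer change. Hence the degree bound persists up to $\hat t$. This degree invariant, together with the delicate index bookkeeping at $t_0-1$ versus $t_0$ in Lemma~\ref{Lemma: t0<t<t2}, is where I expect the real care is needed. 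The case split itself is routine.
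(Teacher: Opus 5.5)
Your proof is correct and follows the paper's argument. When $\hat t$ comes before the stopping time you invoke Lemma~\ref{Lemma: t0<t<t2}; otherwise Lemmas~\ref{Lemma: t1=t*<t_bar} and~\ref{Lemma: t2=t*<t_bar} give $POT_{M^{t_*},B^{t_*}}\le POT_{M^{t_0-1},B^{t_0-1}}\le n$ with $t_0\le t_*\le\hat t$, which contradicts the definition of $t_0$.

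Splitting at $t_*$ rather than at $t_2$, as the paper does, makes no difference. Your separate treatment of the case $POT_{M^{\hat t},B^{\hat t}}\le n$, which guarantees $t_0\le\hat t$, and of the invariant $\Delta(M^t)<x$ makes explicit two points the paper leaves implicit.
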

\begin{proof}
By Lemma~\ref{Lemma: t0<t<t2}, it is sufficient to show that $\hat{t} < t_2$. Assume for a moment that $t_2 \leq \hat{t}$. Then
\[
t_* \leq t_2 \leq \hat{t} \leq \bar{t}.
\]
Since $t_* \in \{t_1,t_2\}$, we obtain
\[
POT_{M^{t_*},B^{t_*}}
\leq POT_{M^{t_0-1},B^{t_0-1}}
\]
by Lemma~\ref{Lemma: t1=t*<t_bar} in the case $t_* = t_1$, and by Lemma~\ref{Lemma: t2=t*<t_bar} in the case $t_* = t_2$.
We know $t_0 \leq t_*$ by (\ref{def: T1}) and (\ref{def: T2}), and therefore
\[
POT_{M^{t_0-1},B^{t_0-1}}
\leq n
< POT_{M^{t_*},B^{t_*}}
\]
by Definition~\ref{Def: t_0}. It follows that
\[
n
< POT_{M^{t_*},B^{t_*}}
\leq POT_{M^{t_0-1},B^{t_0-1}}
\leq n,
\]
which is a contradiction.
\end{proof}

With the help of the previous lemma we can now prove that Breaker wins the Maker-Breaker-$C$-game with bias $q(n)$ by playing the $pot$-strategy.
\subsection{Proof of Theorem 2.6 }

\begin{proof}
We know that
\[
POT_{M^{t},B^{t}} < 2n
\]
for all $t \in [\overline{t}]$, because otherwise there would be a first point in time $\hat{t}$ with
\[
POT_{M^{\hat{t}},B^{\hat{t}}} \geq 2n,
\]
which is impossible by Lemma~\ref{Lemma: POT<2n}.
Since
\[
POT_{M^{t},B^{t}}
=
\sum_{v \in V} pot_{M^{t},B^{t}}(v),
\]
we also have
\[
pot_{M^{t},B^{t}}(v) < 2n
\]
for all $t \in [\overline{t}]$ and all $v \in V$. By Definition~\ref{Balance-Potential Function}(ii),
\[
0 < pot_{M,B}(v) \leq 2n
\quad \Rightarrow \quad
\deg_M(v) \neq \lceil x \rceil -1,
\]
which implies that the maximum degree of Maker's graph at the end of the game satisfies
\[
\Delta(M^{\overline{t}}) < x.
\]
Therefore, by Definition~\ref{Balance-Potential Function}(iii)(a), we have
\begin{align*}
1
&\leq bal_0\bigl(q - |obl(M^{t+1},B^t)|\bigr)\\
&\leq q - |obl(M^{t+1},B^t)|.
\end{align*}
Hence, in every turn Breaker can claim all edges which may lead to a win for Maker, and therefore Breaker wins the game.
\end{proof}

\begin{remark}
It is easy to see that the worst case running time for Breaker to win the game is
$O(n^4 \log {} n )$:
In each round we have first to update the Maker resp. Breaker degrees.
This concerns $2(q+1)$ vertices. Then we update the potential
for each of the $q + 1$ edges, which can be done in
constant time for each edge.
After updating the potential we must sort the edges in decending order
of its potential, which takes by standard sorting algorithms
 $O(n^2 \log {} n)$ time. From this list Breaker can choose $q$ edges according
to the potential function strategy. So,
 in total the running time is  $ O(n^2/q) O(q) O(n^2 \log {}n) = O(n^4 \log {}n) $.
\end{remark}

\section{The $k$-Cycle Game}
In this section we apply Theorem \ref{Theorem: Breaker's win} to the Maker-Breaker $k$-cycle game in which Maker tries to claim a cycle of length $k\geq3$.\\
We prove that for $q>\sqrt[k-1]{(k-1)\big(\frac{2(k-1)}{k}\big)^{k-2}n^{k-2}}$ Breaker has a winning strategy for sufficiently large $n$. For $n\in\{3,4\}$, these are exactly the bounds shown in \cite{Glazik} and \cite{Sowa-Master}, respectively. For $n>4$ these are the first explicitly known upper bound constants for the Maker-breaker $k$-cycle game.


\begin{theorem}\label{Theorem: Breaker wins Ck}
Let $k\in \mathbb{N}_{\geq 3}$, $\beta > \frac{2(k-1)}{k}$ and $q(n) := \sqrt[k-1]{(k-1)\beta^{k-2}n^{k-2}}$. There exists $N \in \mathbb{N}$ so that for $n>N$ Breaker has a winning strategy for the Maker-Breaker $C_k$-Game with bias $q=q(n)$.
\end{theorem}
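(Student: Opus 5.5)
The natural route is to apply Theorem~\ref{Theorem: Breaker's win}. It suffices to construct, for each $n$, a balance function $bal_n$ together with $x_n$ and $b^*$, and a decreasing $\theta(n)\to 0$, such that $pot_{bal_n,q(n),\theta(n)}$ is a balance-potential in the sense of Definition~\ref{Balance-Potential Function}. The guiding heuristic is the count of obligatory edges. If Maker's maximum degree is $D$, a new Maker edge $\{u,v\}$ can close at most about $D^{k-2}$ paths of length $k-1$ between $u$ and $v$, and hence creates at most that many obligatory edges. Breaker therefore needs $q \gtrsim D^{k-2}$ for obligatory edges alone. Meanwhile the degree-balancing part of the strategy must keep Maker's degree $D$ from running ahead of Breaker's degree, which is roughly $qD/\beta$.

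I would therefore take the deficit to be driven by Maker degree $m$ against a Breaker-degree target $b^*(m)$ that grows like a polynomial in $m$. Concretely, I would let $bal(m,b)$ be the ratio $b^*$-budget$/b$, normalized so that $bal_0=bal(0,0)<1$. Here $b^*(m)$ solves $bal(m,b^*(m))=bal_0$, and its shape is roughly $b^*(m)\approx \frac{\beta}{k-1}\cdot\frac{m^{k-1}}{\dots}$. It is calibrated so that the total obligatory load Breaker has absorbed by the time Maker reaches degree $m$ is $\sum_{j\le m} j^{k-2}\approx m^{k-1}/(k-1)$, measured in units of $q$. The degree threshold $x_n$ is chosen as the solution of $q - x_n^{k-2}\cdot(\text{const}) \approx 1/bal_0$. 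This makes condition (iii)(a) hold whenever $\Delta(M)<x_n$: since obligatory edges after one Maker move number at most $(\Delta(M))^{k-2}$, we get $bal_0(q-|obl|)\ge 1$. The constant $\frac{2(k-1)}{k}$ should appear here. Combining $x_n^{k-2}\approx q$ with the requirement that Breaker's degree budget $n$ is not exhausted before Maker reaches degree $x_n$ (roughly $x_n\cdot q/\beta' \le n$ with the factor $\frac{k}{2(k-1)}$ from averaging over the endpoints) gives exactly $q^{k-1}\approx (k-1)\beta^{k-2}n^{k-2}$.

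With these choices I would verify the conditions in turn. Condition (i), that $bal_0$ is independent of $n$, holds by construction: take $bal_0$ to be a fixed constant depending only on $\beta$ and $k$, chosen in the gap between $\frac{2(k-1)}{k}$ and $\beta$. Condition (iii)(b) says that the deficit increase at $v$ from Maker's move, namely $b^*(\deg+1)-b^*(\deg)\approx \frac{\beta}{\dots}\deg^{k-2}$, is at most $bal_0(q-|obl(v)|)$. I would prove it using the same path-counting bound for $|obl(v)|$ together with the mean-value estimate for $b^*$. Condition (ii) is the potential bound. A vertex whose Maker degree reaches $\lceil x_n\rceil-1$ while its potential is positive has deficit at least of order $q\log n/\theta$. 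Choosing $\theta(n)\to 0$ slowly, say $\theta=1/\log\log n$, makes $(1+\theta)^{d/q}>2n$. This is where the slack $\beta>\frac{2(k-1)}{k}$ is used: it leaves room for the deficit at degree $x_n-1$ to exceed $\frac{q}{\theta}\ln(2n)$ even though Breaker's degree is capped at $n-1$.

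The main obstacle is reconciling (ii) with (iii)(b). Condition (ii) needs $b^*(\lceil x_n\rceil-1)-(n-1)$ to be large, of order $q\log n/\theta$, which is $o(n)$ only if $q\log n=o(n)$. Condition (iii)(b) needs the increments of $b^*$ to be small. I would first try $b^*(m)=\frac{\beta'}{(k-1)}m^{k-1}q^{-(k-2)/(k-1)}\cdot(\dots)$ with $\beta'$ strictly between $\frac{2(k-1)}{k}$ and $\beta$, and check both inequalities asymptotically. If the exact polynomial form fails near $m\approx x_n$, I would instead define $b^*$ piecewise by summing the permitted increments $bal_0(q-m^{k-2})$ step by step, so that (iii)(b) holds by definition, and then verify (ii) via the resulting sum estimate.
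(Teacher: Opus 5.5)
Your framework (exhibit a balance-potential and invoke Theorem~\ref{Theorem: Breaker's win}) matches the paper's, and you correctly identify the tension between (ii) and (iii)(b). But the proposal never produces a $b^*$ that satisfies both at the stated bias. Condition (iii)(b) caps $b^*(m+1)-b^*(m)$ by $bal_0(q-|obl(v)|)$. The worst case of $|obl(v)|$ grows with $m=\deg_M(v)$, so the admissible increments \emph{decrease} in $m$, and $b^*$ has to be concave. Your ansatz $b^*(m)\propto m^{k-1}$ is convex. Take $(k-1)x^{k-2}=q$; then the permitted increment near $m=x$ is about $bal_0\,q/(k-1)$. Matching your increment to this value gives $b^*(x)\lesssim bal_0\beta n/(k-1)^2$, so (ii) would force $\beta$ of order $(k-1)^2$ rather than $\beta>\frac{2(k-1)}{k}$.

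Your fallback, summing increments $bal_0(q-m^{k-2})$, does not satisfy (iii)(b) ``by definition'', because $|obl(v)|\le \deg_M(v)^{k-2}$ is false. An obligatory edge avoiding $v$ closes a path of length $k-1$ through the new edge in which $v$ is interior. Such a path leaves $v$ along one of its $\deg_M(v)$ old Maker edges and then takes $k-3$ further steps through vertices of degree up to $\Delta(M)$. The correct bound is therefore $|obl(v)|\le (k-2)\deg_M(v)\Delta(M)^{k-3}$. For $k\ge 4$ this can be of order $x^{k-3}\gg 1$ while $\deg_M(v)=1$: hang a Maker tree of depth $k-3$ and branching of order $x$ off $v$'s unique Maker neighbour.

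The missing step is the computation built on this bound, which is linear in $\deg_M(v)$, together with $\Delta(M)<x$. It dictates the concave quadratic $b^*(m)=bal_0\bigl(qm-\frac{k-2}{2}x^{k-3}m^2\bigr)$. Its increments $bal_0\bigl(q-(k-2)x^{k-3}m-\frac{k-2}{2}x^{k-3}\bigr)$ satisfy (iii)(b). Fixing $x$ by $(k-1)x^{k-2}=q$ makes $|obl(M',B)|\le (k-1)\Delta(M)^{k-2}$ yield (iii)(a). The definition of $q$ is precisely $qx=\beta n$, so $b^*(x)\approx bal_0\,qx\bigl(1-\frac{k-2}{2(k-1)}\bigr)=bal_0\frac{k}{2(k-1)}\beta n$.

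The factor $\frac{k}{2(k-1)}$ you attribute to ``averaging over the endpoints'' is exactly the average of the permitted increment between $m=0$ (value $bal_0q$) and $m=x$ (value $bal_0q/(k-1)$). It appears only for this concave target, not for the $m^{k-1}$ form you propose. Taking $bal_0\in\bigl(\frac{2(k-1)}{k\beta},1\bigr)$ then gives a deficit linear in $n$ at Maker degree $\lceil x\rceil-1$; the paper gets $\frac{2\delta n}{k}$, parametrising $bal_0$ by $\delta$. This yields (ii) with $\theta$ of order $\ln n/x$, and your $\theta=1/\log\log n$ would also work at that point. Without this computation, neither (iii)(b) nor the threshold $\beta>\frac{2(k-1)}{k}$ is established.
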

For the proof we define balance functions and show that these balance functions fulfill the assumptions of Theorem \ref{Theorem: Breaker's win}.
Let $k\in \mathbb{N}_{\geq 3}$, $C:= C_k$, $n\in \mathbb{N}$, $\bar{\beta} := \frac{2(k-1)}{k}$, $\beta \in (\bar{\beta},(k-1)\bar{\beta})$, $\delta \in (0,1-\frac{\bar{\beta}}{\beta})$, $q:=  \sqrt[k-1]{(k-1)\beta^{k-2}n^{k-2}}$, $x := \sqrt[k-2]{\frac{q}{k-1}}$, $\Theta := \frac{2\,\beta\,\ln(n)\,k}{\delta\,x}$.
Our balance function $bal$ is defined as follows:
\begin{align}\label{def: bal}
bal \colon [0,x] \times \mathbb{R} &\to \mathbb{R} \notag\\
(m,b) &\mapsto
\frac{n-b}{
qx(1-\delta)\frac{k+(k-2)\delta}{2(k-1)}
+
m\bigl(\tfrac{k-2}{2}\,m\,x^{k-3}-q\bigr)
}
\,,
\end{align}
and with $bal_0 := bal(0,0)$, $b^*$ is the function
\begin{align}\label{def: b*}
b^* \colon \mathbb{R} &\to \mathbb{R} \notag\\
m &\mapsto
n
-
bal_0\biggl(
qx(1-\delta)\frac{k+(k-2)\delta}{2(k-1)}
+
m\bigl(\tfrac{k-2}{2}\,m\,x^{k-3}-q\bigr)
\biggr)
\,.
\end{align}
Further $pot := pot_{bal,q,\Theta} $ is defined as in Definition \ref{def: balance}(iii). 
We assume $n$ is sufficiently large. Note that $bal_0$ is independent on $n$ since by definition of $x$ and $q$ we have
\begin{equation}\label{eq: qx=bn}
qx=\beta n\,.
\end{equation}
We use the notions $d_{M,B}(v)$ and $pot_{M,B}(v)$ for $ d_{bal}(M,B,v)$ and $pot_{bal,q,\theta}(M,B,v)$ as in (\ref{def: d short}) and (\ref{def: pot short}).

\begin{lemma}\label{Lemma: bal0}
We have
\begin{enumerate} 
\renewcommand{\labelenumi}{(\roman{enumi})}
\item $bal_0 = \frac{1}{\beta(1-\delta)\frac{k+(k-2)\delta}{2(k-1)}}$
\item $ {\frac{\bar{\beta}}{\beta}} < bal_0
< {\frac{\bar{\beta}}{\beta(1-\delta)}} 
< 1$ .
\end{enumerate}
\end{lemma}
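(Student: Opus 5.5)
For (i), we simply evaluate the definition (\ref{def: bal}) at $(m,b)=(0,0)$. The term $m\bigl(\tfrac{k-2}{2}mx^{k-3}-q\bigr)$ vanishes, so
\[
bal_0=\frac{n}{qx(1-\delta)\frac{k+(k-2)\delta}{2(k-1)}} .
\]
Substituting $qx=\beta n$ from (\ref{eq: qx=bn}) cancels $n$ and gives the claimed formula. It also shows that $bal_0$ does not depend on $n$.

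For (ii) we compare the denominator factor $D(\delta):=(1-\delta)\frac{k+(k-2)\delta}{2(k-1)}$ with two reference values. At $\delta=0$ we have $D(0)=\frac{k}{2(k-1)}=\frac{1}{\bar\beta}$.

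\textbf{Lower bound.} Write
\[
(1-\delta)\bigl(k+(k-2)\delta\bigr)=k-2\delta-(k-2)\delta^2 .
\]
This is strictly less than $k$ for $\delta>0$. Hence $D(\delta)<1/\bar\beta$, and therefore $bal_0=\frac{1}{\beta D(\delta)}>\frac{\bar\beta}{\beta}$.

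\textbf{Middle bound.} We need $D(\delta)>\frac{1-\delta}{\bar\beta}=(1-\delta)\frac{k}{2(k-1)}$. This is equivalent to $k+(k-2)\delta>k$, which holds since $k\ge 3$ and $\delta>0$. This gives $bal_0<\frac{\bar\beta}{\beta(1-\delta)}$.

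\textbf{Upper bound.} The last inequality $\frac{\bar\beta}{\beta(1-\delta)}<1$ is equivalent to $\delta<1-\frac{\bar\beta}{\beta}$, which is exactly the standing choice of $\delta$. Note that $\delta>0$ is possible because $\beta>\bar\beta$.

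There is no real obstacle here: the statement is elementary algebra. The only points needing care are the use of $k\ge3$ (so that $k-2>0$) and keeping track of the direction of the inequalities when inverting positive quantities. Positivity holds because $0<\delta<1$ implies $D(\delta)>0$.
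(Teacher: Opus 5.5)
Your proposal is correct and follows the same route as the paper: evaluate $bal$ at $(0,0)$, cancel $n$ via $qx=\beta n$, and compare $(1-\delta)\bigl(k+(k-2)\delta\bigr)$ with $k$ for the lower bound and with $(1-\delta)k$ for the middle bound. You also make explicit that the final inequality $\frac{\bar\beta}{\beta(1-\delta)}<1$ is exactly the standing choice $\delta<1-\frac{\bar\beta}{\beta}$, a step the paper leaves unstated.
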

\begin{proof}
\begin{enumerate}
\renewcommand{\labelenumi}{(\roman{enumi})}
\item
By (\ref{eq: qx=bn}),  we know
\[
bal_0
= bal(0,0)
= \frac{n}{
qx(1-\delta)\frac{k+(k-2)\delta}{2(k-1)}
}
= \frac{1}{
\beta(1-\delta)\frac{k+(k-2)\delta}{2(k-1)}
}
\,.
\]
\item
Since $(k-2)\delta > 0$, we get
\[
bal_0
<
\frac{1}{
\beta(1-\delta)\frac{k}{2(k-1)}
}
=
\frac{2(k-1)}{\beta(1-\delta)k}
=
\frac{\bar{\beta}}{\beta(1-\delta)}
< 1.
\]
On the other hand, since $(1-\delta)\bigl(k+(k-2)\delta\bigr) < k$,
we have
\[
bal_0
=
\frac{2(k-1)n}{
qx(1-\delta)\bigl(k+(k-2)\delta\bigr)
}
>
\frac{2(k-1)n}{kqx}
=
\frac{2(k-1)}{k\beta}
=
\frac{\bar{\beta}}{\beta}
\,.
\]
\end{enumerate}
\end{proof}

\begin{lemma}\label{Lemma: deg(v)<x}
Let $M,B \in \mathfrak{P}(E)$ and $v\in V$. For sufficiently large $n$ we have
$$
0<pot_{M,B}(v)\leq2n \Rightarrow \deg_M(v) \neq \lceil x \rceil -1\,.
$$
\end{lemma}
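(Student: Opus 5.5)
We argue by contraposition: we assume $\deg_M(v)=\lceil x\rceil-1$ and $pot_{M,B}(v)>0$, and show that $pot_{M,B}(v)>2n$ for $n$ large. Positivity means $pot_{M,B}(v)=(1+\Theta)^{d_{M,B}(v)/q}$ with $d_{M,B}(v)=b^*(m)-\deg_B(v)$, where $m:=\deg_M(v)$. Since $\deg_B(v)\le n-1-m<n$, it suffices to bound $b^*(m)-n$ from below. By (\ref{def: b*}),
\[
b^*(m)-\deg_B(v)\;>\;-bal_0\Bigl(qx(1-\delta)\tfrac{k+(k-2)\delta}{2(k-1)}\Bigr)+bal_0\, m\Bigl(q-\tfrac{k-2}{2}\,m\,x^{k-3}\Bigr).
\]
By Lemma~\ref{Lemma: bal0}(i) and (\ref{eq: qx=bn}), the first term equals exactly $-n$.

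Next we evaluate the second term at $m=\lceil x\rceil-1\in[x-1,x)$. Using $x^{k-2}=q/(k-1)$, we get $m\,x^{k-3}<x^{k-2}=q/(k-1)$, so
\[
q-\tfrac{k-2}{2}m x^{k-3}\;>\;q\Bigl(1-\tfrac{k-2}{2(k-1)}\Bigr)=q\,\tfrac{k}{2(k-1)}.
\]
Hence the second term exceeds $bal_0\,(x-1)\,q\,\frac{k}{2(k-1)}=bal_0\frac{k}{2(k-1)}(\beta n-q)$. Inserting the value of $bal_0$ gives $\frac{k}{(1-\delta)(k+(k-2)\delta)}(n-q/\beta)$. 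Since $(1-\delta)(k+(k-2)\delta)=k-2\delta-(k-2)\delta^2<k$ (using $\delta<1$), this coefficient is $>1$. So $d_{M,B}(v)\ge (1+\kappa)n - n - O(q)$ for some constant $\kappa=\kappa(k,\delta)>0$. As $q=o(n)$ (because $q=\Theta(n^{(k-2)/(k-1)})$), this yields $d_{M,B}(v)\ge \kappa n/2$ for large $n$.

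Finally, $pot_{M,B}(v)\ge (1+\Theta)^{\kappa n/(2q)}$. Here $\Theta=\frac{2\beta k\ln n}{\delta x}\to 0$, so $\ln(1+\Theta)\ge\Theta/2$. The exponent of $e$ is therefore at least
\[
\frac{\kappa n}{4q}\cdot\frac{2\beta k\ln n}{\delta x}=\frac{\kappa k}{2\delta}\ln n,
\]
using $qx=\beta n$. This gives $pot_{M,B}(v)\ge n^{\kappa k/(2\delta)}$, which exceeds $2n$ only if $\kappa k/(2\delta)>1$. This constant bookkeeping is the main obstacle.

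To control it, we compute $\kappa$ exactly: $\kappa=\frac{k}{k-2\delta-(k-2)\delta^2}-1=\frac{2\delta+(k-2)\delta^2}{k-2\delta-(k-2)\delta^2}\ge\frac{2\delta}{k}$. Then $\kappa k/(2\delta)\ge 1$, with strict inequality because $(k-2)\delta^2>0$ once $k\ge3$. However, the halvings above ($\ln(1+\Theta)\ge\Theta/2$ and $\kappa n/2$) lose a constant factor, so the argument has to be tightened. We would replace them with $\ln(1+\Theta)\ge\Theta(1-o(1))$ and $d\ge\kappa n(1-o(1))$, which is legitimate since $\Theta\to0$ and $q/n\to0$. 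This yields $pot_{M,B}(v)\ge n^{(1+\kappa')(1-o(1))}$ with $\kappa'>0$, and hence $pot_{M,B}(v)>2n$ for sufficiently large $n$, contradicting the assumption $pot_{M,B}(v)\le 2n$.
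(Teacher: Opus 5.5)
Your proof is correct and follows the paper's route. You argue by contraposition, lower-bound $b^*(\lceil x\rceil-1)-n$ by using $\lceil x\rceil-1\ge x-1$ in one factor and $\lceil x\rceil-1<x$ in the other to get a deficit linear in $n$, and then use $qx=\beta n$ so that the choice of $\Theta$ turns the exponent into a constant multiple of $\ln n$ (the paper keeps $d_{M,B}(v)\ge 2n\delta/k$ and reaches $n^2$; you keep $\kappa n/2$ and reach $n^{\kappa k/(2\delta)}$).

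Your final ``tightening'' paragraph is unnecessary. The bound $pot_{M,B}(v)\ge n^{\kappa k/(2\delta)}$ already absorbs both halvings, and since $\kappa k/(2\delta)>1$ is a fixed constant, it exceeds $2n$ for large $n$. (Removing the halvings would actually give exponent about $2\kappa k/\delta>4$, not $(1+\kappa')$.)
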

\begin{proof}
We proof the contraposition of the statement. So, let $0<pot_{M,B}(v)$ and $\deg_M(v) = \lceil x \rceil -1$. We show $pot_{M,B}(v)>2n$ for sufficiently large $n$.
By Definition \ref{def: balance}(iii), $pot_{M,B}(v)= (1+\Theta)^{d_{M,B}(v)/q}$. We proceed to show $d_{M,B}(v)\geq \frac{2n\delta}{k}$, and therefore a simple calculation will complete the proof. Note that by definition of $x$, we have 
$x^{k-3}\frac{k-2}{2}x-q= -q\frac{k}{2(k-1)}$. We bound $ b^*(\deg_{M,B}(v))$ from below:
\begin{flalign*}
&& b^*(&\deg_{M,B}(v))=b^*(\lceil x \rceil -1) &&\\
&& &= n - bal_0 \textstyle{\bigg( qx(1-\delta)\frac{k+(k-2)\delta}{2(k-1)} +(\lceil x \rceil -1) \big( x^{k-3}\frac{k-2}{2}(\lceil x \rceil -1)-q \big)\bigg)}&&\\
&& &> n - bal_0 \textstyle{\bigg( qx(1-\delta)\frac{k+(k-2)\delta}{2(k-1)} +(x-1) \big( x^{k-3}\frac{k-2}{2}x-q \big)\bigg)}&&\\
&& &=n - bal_0 \textstyle{\bigg( qx(1-\delta)\frac{k+(k-2)\delta}{2(k-1)} + (x-1) \big( -q\frac{k}{2(k-1)} \big)\bigg)}&&\\
&& &= n + \textstyle{\frac{bal_0}{k-1} \bigg( qx\delta+q\big(\frac{x(k-2)\delta^2-k}{2}\big)\bigg)}&&\\
&& &> n + \textstyle{\frac{bal_0qx\delta}{k-1}}
&& \llap{($x\delta^2> \frac{k}{k-2}$ for large $n$)}\\
&& &= n + \textstyle{\frac{bal_0\beta n\delta}{k-1}} 
&& \llap{(\text{by }(\ref{eq: qx=bn}))}\\
&& &> n + \textstyle{\frac{2 n\delta}{k}}\,.
&& \llap{(Lemma \ref{Lemma: bal0}(ii))}\\
\end{flalign*}
With Definition \ref{def: balance}(ii)
of the deficit function it follows
\begin{equation}\label{eq: d>...}
d_{M,B}(v)
= b^*\big(\deg_M(v)\big)-\deg_B(v) \geq n + \textstyle{\frac{2 n\delta}{k}} -n	
= \textstyle{\frac{2 n\delta}{k}}\,.
\end{equation}
We also need
\begin{equation}\label{eq: ln<...}
\textstyle\big(\frac{1}{\Theta}+1 \big)^{-1} \,\frac{2\delta n}{kq} 
= \frac{\Theta}{\Theta+1} \frac{2\delta n}{kq}
> \frac{\Theta \delta n}{kq}
= \frac{2 \beta \ln(n) n}{qx}
= 2\, \ln(n)\,.
\end{equation}
From (\ref{def: b*}) and (\ref{eq: qx=bn}) we get
\begin{flalign*}
&& pot_{M,B}(v) &= (1+\Theta)^{d_{M,B}(v)/q} \geq (1+\Theta)^{2n\delta/kq}
&& \llap{(\text{by }(\ref{def: b*}))}\\
&& &= (1+\Theta)^{(\frac{1}{\Theta}+1)(\frac{1}{\Theta}+1)^{-1}2n\delta/kq}&&\\
&& &> e^{2\,\ln(n)}
&& \llap{(\text{by }(\ref{eq: qx=bn}))}\\
&& &= n^2 >2n\,,
\end{flalign*}
for $n$ large enough.
\end{proof}

\begin{lemma}\label{Lemma: bal0>...}
Let $v\in V$, $M,B \in \mathfrak{P}(E)$, $e\in E\setminus (M\cup B)$, $M':= M\cup \{e\}$ with $\Delta(M)<x$ and let $obl(v) := \{o\in obl(M',B)\vert \, v\notin o\}$. Then
\begin{enumerate}
	\renewcommand{\labelenumi}{(\roman{enumi})}
	\item $bal_0 \bigg( q-\big\vert obl(M',B) \big\vert \bigg)\geq 1$\\
	\item $bal_0 \bigg( q-\big\vert obl(v) \big\vert \bigg)\geq d_{M',B}(v)-d_{M,B}(v))$
\end{enumerate}
\end{lemma}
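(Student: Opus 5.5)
The plan is to bound $|obl(M',B)|$ by a count of paths in Maker's graph and to compute the deficit increase explicitly from $b^*$. Then we compare both quantities with $q$ using $x^{k-2}=q/(k-1)$ and the bounds on $bal_0$ from Lemma \ref{Lemma: bal0}.

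\textbf{Counting obligatory edges.} An edge $o\notin M'\cup B$ is obligatory exactly when $M'\cup\{o\}$ contains a $C_k$ through $o$, that is, when the endpoints of $o$ are joined by a path of length $k-1$ in $M'$. Since $\Delta(M)<x$, every degree in $M$ is at most $\lceil x\rceil-1<x$, and degrees in $M'$ are at most $x$ (only the endpoints of $e$ can gain one). We may assume Maker had no obligatory edges pending before his move, because Breaker claimed them all in the previous round. So every path of length $k-1$ in $M'$ closing to a new obligatory edge must use $e=\{a,b\}$. Such a path is determined by the position $j\in\{0,\dots,k-2\}$ of $e$ along it, together with $j$ further steps from one endpoint and $k-2-j$ further steps from the other. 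Each step has fewer than $x$ choices, so the total is at most $(k-1)x^{k-2}=q$ up to lower-order corrections, which is not good enough for (i) as it stands.

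\textbf{Using $bal_0<1$ for (i).} To get (i), I would use the slack coming from $\deg_M<x$ strictly, i.e.\ $\deg_M(v)\le\lceil x\rceil-1\le x-\epsilon'$. I would also use that the count of length-$(k-2)$ walks branching from an endpoint is $\prod(\deg-1)$, not $\prod\deg$, since a walk cannot step back along the edge it came from. This yields $|obl(M',B)|\le (k-1)(x-1)^{k-2}+O(x^{k-3})\le q-(k-1)(k-2)x^{k-3}+o(x^{k-3})$. Hence $q-|obl|\gtrsim (k-2)(k-1)x^{k-3}\to\infty$ for $k\ge4$, and $bal_0(q-|obl|)\ge1$ follows for large $n$. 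For $k=3$ there is the cleaner bound $|obl|\le \deg(a)+\deg(b)\le 2(x-1)$ with $q=2x$, so $q-|obl|\ge2$ and $bal_0>\bar\beta/\beta>1/2$ finishes it.

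\textbf{The deficit inequality (ii).} Only $v\in e$ has $d_{M',B}(v)\ne d_{M,B}(v)$; for other $v$ the right side is $0$ and (i) suffices. For $v\in e$ with $m:=\deg_M(v)$, we have
\[
d_{M',B}(v)-d_{M,B}(v)=b^*(m+1)-b^*(m)=bal_0\Big(q-\tfrac{k-2}{2}x^{k-3}(2m+1)\Big).
\]
So (ii) is equivalent to
\[
|obl(v)|\le \tfrac{k-2}{2}x^{k-3}(2m+1)=(k-2)x^{k-3}\big(m+\tfrac12\big).
\]
This requires a finer count of the obligatory edges \emph{avoiding} $v$. Such an edge closes a path $v_0\dots v_{k-1}$ with $v\notin\{v_0,v_{k-1}\}$, so $v$ is an interior vertex of the path. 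Fix the position of $e$ in the path with $v$ interior. Then one step in the path goes out of $v$ to an $M$-neighbour other than the other endpoint $w$ of $e$, giving at most $m$ choices. The remaining $k-3$ steps have at most about $x$ choices each. Summing over the admissible placements gives roughly $(k-2)\,m\,x^{k-3}$, which is within the target $(k-2)x^{k-3}(m+\tfrac12)$.

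\textbf{Main obstacle.} The hard step is this bookkeeping: enumerating placements exactly, handling degree $x$ versus $x-1$ at the endpoints of $e$, and checking that the lower-order error terms are absorbed by the $\tfrac12 x^{k-3}$ slack. I would first try the crude bound (every step $\le x$) and check whether the number of placements with a step out of $v$ is exactly $k-2$. If it is, the slack $\tfrac12(k-2)x^{k-3}$ dominates all corrections of order $x^{k-4}$ for large $n$.
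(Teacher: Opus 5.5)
\textbf{Part (ii).} Your argument for (ii) is essentially the paper's. Only $v\in e$ matters, $b^*(m+1)-b^*(m)=bal_0\bigl(q-\tfrac{k-2}{2}x^{k-3}(2m+1)\bigr)$, and an obligatory edge avoiding $v$ comes from one of the $k-2$ placements of $e$ with $v$ interior. Your bound of $m=\deg_M(v)$ choices for the step out of $v$ is exactly what is needed. The paper's intermediate bound on $|obl(v)|$ is written with $\deg_{M'}(v)=m+1$, which would be too weak, although its final step silently uses $\deg_M(v)$.

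Your ``main obstacle'' is not an obstacle. Every other step, including the first one out of $w$, has at most $\Delta(M)<x$ choices. Hence $|obl(v)|\le (k-2)\,m\,x^{k-3}$ exactly, with no lower-order terms, and the extra $\tfrac12(k-2)x^{k-3}$ is not even used.

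\textbf{Part (i), $k\ge4$.} Here you depart from the paper, and your route is the sounder one. The paper combines the crude count $|obl(M',B)|\le(k-1)\Delta(M)^{k-2}$ with $\Delta(M)\le x-1$. But $\Delta(M)<x$ only gives $\Delta(M)\le\lceil x\rceil-1$, which exceeds $x-1$ whenever $x\notin\mathbb{Z}$. You rightly observe that the crude count then only yields $q-|obl(M',B)|>0$.

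Your non-backtracking refinement needs only $\Delta(M)\le x$, but your constant is wrong. The first step out of each endpoint of $e$ still has up to $\Delta(M)$ choices, not $\Delta(M)-1$. This costs $2(k-2)x^{k-3}$, which is of the same order as the gain, so that $O(x^{k-3})$ term cannot be absorbed into $o(x^{k-3})$. With $\Delta:=\Delta(M)$, the correct count is $|obl(M',B)|\le 2\Delta(\Delta-1)^{k-3}+(k-3)\Delta^2(\Delta-1)^{k-4}$. This gives $q-|obl(M',B)|\ge 2(k-3)\,x\,(x-1)^{k-4}$, so the true order of the slack is $(k-2)(k-3)x^{k-3}$, not $(k-1)(k-2)x^{k-3}$. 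Since this still tends to infinity for $k\ge4$, your conclusion survives.

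\textbf{Part (i), $k=3$.} For $k=3$ the non-backtracking gain vanishes, and your step $\deg_M(a)+\deg_M(b)\le 2(x-1)$ has exactly the paper's defect. From $\Delta(M)<x$ alone one only gets $q-|obl(M',B)|\ge 2\bigl(x-\lceil x\rceil+1\bigr)$. This is attained when $a$ and $b$ are centres of disjoint $M$-stars with $\lceil x\rceil-1$ leaves whose closing edges are unclaimed. It is below $1/bal_0$ whenever the fractional part of $x$ lies in $(0,\tfrac12]$. So (i) genuinely needs more than the stated hypothesis.

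The missing input is a fact available in the game. Before Maker's move, both endpoints of his new edge have positive potential, and that potential is below $2n$. Definition~\ref{Balance-Potential Function}(ii) then forces their $M$-degree to differ from $\lceil x\rceil-1$, so it is at most $\lceil x\rceil-2\le x-1$. You should invoke this explicitly, inside the turn-by-turn induction behind Theorem~\ref{Theorem: Breaker's win}, rather than reading $\deg\le x-1$ off $\Delta(M)<x$.
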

\begin{proof}
Let $u,w\in V$ with $e=\{u,w\}$. Note that the obigatory edges of Breaker $obl(M',B)$ are those that connect the end points of a path in $M'$ of length $k-1$ that contain the last claimed Maker edge $e$. Therefore we can bound $\vert obl(M',B) \vert$ from above:
\begin{equation}\label{eq: obl<q}
\vert obl(M',B) \vert \leq\, (k-1) \Delta(M)^{k-2}\leq (k-1) x^{k-2}=q\,.
\end{equation}
Also we can bound $obl(w)$ from above:
\begin{equation}\label{eq: obl(v)<...}
\vert obl(v) \vert \leq\, (k-2) \deg_{M'}(v)\Delta(M)^{k-3}\,.
\end{equation}
(i) We have
\begin{flalign*}
&&  bal_0 \bigg( q&- \vert obl(M',B) \vert \bigg) \geq bal_0 \bigg( q - (k-1) \Delta(M)^{k-2}  \bigg)  
&& \llap{(by (\ref{eq: obl<q}))}\\
&& &= bal_0 \bigg( (k-1) x^{k-2} - (k-1) \Delta(M)^{k-2}  \bigg) 
&& \llap{(Definition of $x$)}\\
&& &\geq bal_0 \bigg( (k-1) x^{k-2} - (k-1) (x-1)^{k-2} \bigg) &&\\
&& &> bal_0  (k-1)&&\\
&& &> \frac{\bar{\beta}}{\beta}  (k-1)
&& \llap{(Lemma \ref{Lemma: bal0}(ii))}\\
&& &>1\,.
\end{flalign*}
(ii) Let $v \in V$. If $v\notin e$, then trivially
$$ d_{M',B}(v)-d_{M,B}(v) \leq 0 \leq bal_0\bigg( q- \vert obl(M',B) \vert \bigg)\,.$$
Otherwise we have $ deg_{M'}(v) =  deg_{M}(v)+1$ and therefore by definition of $d$ and $b^*$
\begin{flalign*}
&&  d_{M',B}(v)-d_{M,B}(v) &=b^*\big( \deg_{M'}(v) \big)-b^*\big( \deg_{M}(v) \big) &&\\
&& &= \textstyle bal_0\bigg( q- (k-2)\, \deg_M(v)x^{k-3} - \frac{k-2}{2}x^{k-3}\bigg) &&\\
&& &\leq bal_0\bigg( q-(k-2)\, \deg_M(v) \Delta (M)^{k-3} \bigg) &&\\
&& &\leq bal_0\bigg( q- \vert obl(v) \vert \bigg)\,.
&& \llap{(by (\ref{eq: ln<...}))}
\end{flalign*}
\end{proof}

{\em Proof of Theorem \ref{Theorem: Breaker wins Ck}:}
Lemma \ref{Lemma: deg(v)<x} and Lemma \ref{Lemma: bal0>...}  ensure that our balance function fullfills the conditions in Definition \ref{def: balance} . Thus Theorem \ref{Theorem: Breaker's win}  can be applied and Breaker wins by playing acoording to the potential function strategy corresponding to this very balance function.\\
\hspace*\fill $\qed$

\section{Concluding Remarks}
It would be interesting to design concrete potential functions in view of our potential function theorem
leading to winning strategies for Breaker for other subgraphs $C$. We think that $C = K_4$ would be the
next and most interesting candidate. Further, as already said, constructive Maker strategies for $k$-cycles,
but also for the notorious $K_4$ would be very interesting. Finally, we may ask whether deterministic polynomial-time
strategies for both players for the subgraph game can de derived derandomizing the random strategies of
Bednarska and Łuczak \cite{Bednarska-Luczak}. 

\bibliographystyle{plain}
\bibliography{sources}

\end{document}